\documentclass[11pt,a4paper]{amsart}
\usepackage[american]{babel}

\usepackage{graphicx,url,etoolbox}

\makeatletter
\patchcmd{\@settitle}{center}{flushleft}{}{}
\patchcmd{\@settitle}{center}{flushleft}{}{}
\patchcmd{\@setauthors}{\centering}{\raggedright}{}{}
\patchcmd{\abstract}{3pc}{0pt}{}{} 
\makeatother
\usepackage{mathtools}
\usepackage{amsmath,amssymb,amsfonts,color}
\usepackage{algorithm}
\usepackage{algorithmic}
\usepackage{color}
\usepackage{booktabs}
\usepackage[mathscr]{eucal}
\usepackage{url}
\usepackage{dsfont}
\usepackage{subfloat}
\usepackage{subcaption}
\usepackage{tabularx}
\usepackage{hyperref}
\usepackage{caption}
\usepackage{subcaption,graphicx}
\usepackage{bm}
\usepackage{epstopdf}
\usepackage{xargs}
\usepackage[pdftex,dvipsnames]{xcolor}
\setlength{\textwidth}{\paperwidth}
\setlength{\marginparwidth}{2cm}
\addtolength{\textwidth}{-2in}
\calclayout

 \numberwithin{equation}{section}

\newtheorem{lemma}{Lemma}

\newtheorem{proposition}{Proposition}

\newtheorem{remark}{Remark}
\newtheorem{definition}{Definition}
\newtheorem{assumption}{Assumption}


\newcommand{\cA}{\mathcal{A}}

\newcommand{\R}{\mathbb{R}}
\newcommand{\N}{\mathbb{N}}

\newcommand{\cP}{\mathcal{P}}

\newcommand{\cU}{\mathcal{U}}

\newcommand{\cJ}{\mathcal{J}}
\newcommand{\cH}{\mathcal{H}}
\newcommand{\cL}{\mathcal{L}}

\newcommand{\E}{\mathbb{E}}
\newcommand{\px}[1]{\partial_{{x} _{#1}}}

\newcommand{\norm}[1]{\left\lVert #1\right\rVert}

\def\one{\mbox{1\hspace{-4.25pt}\fontsize{12}{14.4}\selectfont\textrm{1}}}

\definecolor{philipp}{RGB}{205, 102, 000}


\usepackage[colorinlistoftodos,prependcaption,textsize=tiny]{todonotes}

\begin{document}

\title[Policy iteration for HJB equations with control constraints]{Policy iteration for Hamilton-Jacobi-Bellman equations with control constraints}	

\author{Sudeep Kundu\textsuperscript{$\dagger$}}
\thanks{\textsuperscript{$\dagger$}Institute for Mathematics and Scientific Computing, University of Graz, Heinrichstrasse 36, A-8010 Graz, Austria, ({\tt
		sudeep.kundu@uni-graz.at}).}
\author {Karl Kunisch\textsuperscript{$*$}}
\thanks{\textsuperscript{$*$}Institute for Mathematics and Scientific Computing, University of Graz, Heinrichstrasse 36, A-8010 Graz, Austria and
		Radon Institute for Computational and Applied Mathematics (RICAM), Altenbergerstra{\ss}e 69, A-4040 Linz, Austria, ({\tt
		karl.kunisch@uni-graz.at}).}
	\maketitle
\begin{abstract}
Policy iteration is a widely used technique to solve the
Hamilton Jacobi Bellman (HJB) equation, which arises  from nonlinear optimal feedback control theory. Its convergence analysis has attracted much attention in the unconstrained case. 
Here we analyze  the case with control constraints both for the HJB equations which arise in deterministic and in stochastic control cases. The linear equations in each iteration step are solved by an implicit upwind scheme. 
 Numerical examples are conducted to solve the HJB equation with control constraints and comparisons are shown with the unconstrained cases.
\end{abstract}

	{\em{ Keywords:}}
	Optimal Feedback Control, $\cH_2$ control synthesis, Hamilton-Jacobi-Bellman Equations,
		Policy iteration, Upwind scheme.
	
	{\em{AMS classification:}}
49J20, 49L20, 49N35, 93B52. 
\section{Introduction}\label{intro}
Stabilizability is one of the major objectives in the area of optimal control theory. Since  the open loop control depends only on the time and the initial state, so if the initial state is changed, unfortunately control needs to be recomputed again. In many physical situation, we are particularly interested in seeking a control law which depends on the  state and which can deal additional external perturbation or model errors. When the dynamical systems is linear with unconstrained control and corresponding cost functional or performance index is quadratic, then the associated closed loop or feedback control law which minimizes the cost functional  is obtained by the so called Riccati equation. Now when the dynamics is nonlinear, one popular approach is to linearize the dynamics and obtain the Riccati based controller to apply for the original nonlinear system, see e.g. \cite{L69}.

If the optimal feedback cannot be obtained by LQR theory, then it can be approached by the verification theorem and  the value function, which in turn is a solution of the Hamilton Jacobi Bellman (HJB) equation associated to the optimal control problem, see e.g.  \cite{FR12}.  But in most of the situations, it is very difficult to obtain the exact value function and thus one has to resort to iterative techniques.  One possible approach utilizes the so called value iteration. Here we are interested in more efficient technique,
known as policy iteration. Discrete counterpart of policy iteration is also known as Howards' algorithm \cite{H60}, compare also \cite{BMZ09}. Policy iteration can be interpreted as a Newton method applied to the HJB equation. Hence, using the policy iteration HJB equation reduces to a sequence of linearized  HJB equation, which for historical reasons are called 'generalized HJB equations'.  The policy iteration requires as initialization  a stabilizing control. If such a control is not available, then one can  use discounted path following policy iteration, see e.g. \cite{kk2018}. The focus of the present paper is the analysis of the policy iteration in the presence of control constraints. To the authors' knowledge, such an analysis is not available in the literature. 

Let us next mentions very selectively, some of the literature on the numerical solution of the control-HJB equations. Specific comments for  the constraint case,  are given further below.
Finite difference methods and vanishing viscosity method are developed in the work of Crandall and Lions \cite{CL84}. Other notable techniques are  finite difference methods \cite{CDI84}, semi-Lagrangian schemes \cite{AFK15, FF14}, the finite element method \cite{GR85}, filtered schemes \cite{FS}, domain decomposition methods \cite{CCFP12}, and level set methods \cite{OF03}. For an overview we refer to \cite{FF16}.
Policy iteration algorithms are developed  in \cite{LL67, SL79, BST98, BM98}.
If the dynamics are given by a partial differential equation (PDE), then the corresponding HJB equations become infinite dimensional. Applying grid based scheme to convert PDE dynamics to  ODE dynamics leads to a high dimensional HJB equation. This phenomenon is known as the curse of dimensionality. In the literature there are different techniques to tackle this situation. Related recent works include, polynomial approximation \cite{kk2018}, deep neural technique \cite{KW20}, tensor calculus \cite{DKK19}, Taylor series expansions \cite{BKP19} and graph-tree structures \cite{AFS18}.

Iteration in policy space for second order HJB PDEs arising in stochastic control is discussed in \cite{WS02}.  Tensor calculus technique is used in \cite{YPL} where the associated HJB equation becomes linear after using some exponential transformation and scaling factor. For an overview for numerical approximations to stochastic HJB equations, we refer to \cite{KD01}. To solve the continuous time stochastic control problem by the Markov chain approximation method (MCAM) approach, the diffusion operator is approximated by a finite dimensional Markov decision process, and further solved by the policy iteration algorithm.
For the connection between the finite difference method and the MCAM  approach for the second order  HJB equation, see \cite{BZ03}.
Let us next recall  contributions for the case with control constraints.
Regularity of the value function in the presence of control constraints has been discussed in \cite{G05, G07} for linear dynamics and in \cite{CF13}  for nonlinear systems. In \cite{L98} non-quadratic penalty functionals were introduced to approximately  include  control constraints. Later in \cite{AL05} such functionals were  discussed  in the context of policy iteration for HJB equations.

Convergence of the policy iteration without constraints  has been investigated in earlier work, see in particular \cite{SL79, WS02}.
In our work, the control constraints are realized exactly by means of a projection operator.
This approach has  been used earlier only  for numerical purposes  in the context of the value iteration, see \cite{GK16} and \cite{KKZ14}.  In our work  we  prove the convergence of the policy iteration with control constraints for both the first and the second order HJB-PDEs. For numerical experiments we use an implicit  upwind scheme as proposed in \cite{Aal17}.

The rest of the paper is organized as follows. Section $2$ provides  a convergence of the policy iteration  for the first order HJB equation in the presence of control constraints. Section $3$ establishes corresponding convergence result  for the second order HJB equation with control constraints. Numerical tests are presented in Section $4$.
Finally in Section $5$ concluding remarks are given.

\section{Nonlinear $\cH_2$ feedback control problem subject to deterministic system}
\subsection{First order Hamilton-Jacobi-Bellman equation}
We consider the following infinite horizon optimal control problem
\begin{align}\label{cost}
\underset{u(\cdot)\in\cU}{\min}\;\cJ(x,u(\cdot)):=\int\limits_0^\infty \Big(\ell(y(t))+\|u(t)\|_R^2\Big)\, dt,
\end{align}
subject to the nonlinear deterministic dynamical constraint
\begin{equation}\label{eq1.1}
\dot y(t)= f(y(t))+g(y)u(t)\,,\quad y(0)=x,
\end{equation}
where $y(t)=(y_1(t),\ldots,y_d(t))^t\in  \R^d$ is the state vector, and $u(\cdot)\in\cU$ is the control input with $\cU=\{u(t):\, \R_+\rightarrow U\subset\R^m\}$. Further $\ell(y)>0$, for $y\neq 0$, is the state running cost, and $\|u\|_R^2=u^tRu$, represents the control cost,  with $R\in\R^{m\times m},\,R>0$  a positive definite matrix. Throughout we assume that the dynamics $f$ and $g$, as well as $\ell$ are Lipschitz continuous on $\R^d$, and that $f(0)=0$ and $\ell(0)=0$. We also require that $g$ is globally bounded on $R^d$. This set-up relates to our aim  of asymptotic stabilization to the origin by means of the control $u$. We shall concentrate on the case where the initial conditions are
chosen from a domain $\Omega\subset \R^d$ containing the origin in its interior.

The specificity of this work relates to controls which need to obey constraints $u(t)\in U$, where $U$ is a closed convex set containing $0$ in $\R^m$.
As a special case we mention bilateral point-wise constraints of the form
\begin{equation}\label{eq:kk1}
U=\{u \, | \,\alpha\leq u \le \beta  \},
\end{equation}
where $\alpha=(\alpha_1,\ldots,\alpha_m)\in \R^m$, $\beta=(\beta_1,\ldots,\beta_m)\in\R^m$, $\alpha\leq0,$ $\beta\geq 0$, and the inequalities act coordinate-wise.\\

 The optimal value function associated to \eqref{cost}-\eqref{eq1.1} is given by
\[V(x)=\underset{u(\cdot)\in\cU}{\min} \cJ(x,u(\cdot)),\]
where $x\in \Omega$. Here and throughout the paper we assume that for every $x\in \Omega$ a solution to \eqref{cost} exists. Thus, implicitly we also assume that the existence of a control $u\in L^2(0,\infty;\R^m)$ such that \eqref{eq1.1} admits a solution  $y\in W^{1,2}(0,\infty;\R^d)$. If required by the context, we shall indicate the dependence of $y$ on $x\in \Omega$ or $u \in \mathcal{U}$, by writing $y(\cdot;x)$, respectively $y(\cdot;u)$.

In case $V\in C^1(\R^d)$ it satisfies the Hamilton-Jacobi-Bellman (HJB) equation
\begin{equation}\label{hjb}
\underset{u\in U}{\inf}\{ \nabla V(x)^t(f(x)+g(x) u)+ \ell(x)+\|u\|_R^2\}=0\,,\quad V(0)=0\,,
\end{equation}
with $\nabla V(x)=(\px{1}V,\ldots,\px{d}V)^t$. Otherwise, sufficient conditions which guarantee that $V$ is the unique viscosity solution to \eqref{hjb}  are well-investigated, see for instance \cite[Chapter III]{BCD97}.

In the unconstrained case, with $U=\R^m$ the value function is always differentiable  (see \cite[pg. 80]{BCD97}).  If $\ell=x^tQx$, with $Q$
a positive definite matrix, and linear control dynamics of the form $f(y)+g(y)u=Ay+Bu$, the value function is differentiable provided that $U$ is nonempty polyhedral (finite horizon) or closed convex set with $0\in int\hspace{0.1cm} U$ (infinite horizon), see \cite{G05, G07}. Sufficient conditions for (local) differentiability of the value function associated to  finite horizon optimal control problems with  nonlinear  dynamics and control constraints  have been  obtained in \cite{CF13}. For our analysis, we assume that

\begin{assumption}\label{ass1}
The value function satisfies  $V\in C^1(\R^d)$. Moreover it is radially unbounded, i.e. $\lim_{\|x\|\to \infty}V(x) =\infty$.
\end{assumption}

With Assumption  \ref{ass1} holding, the verification theorem, see eg. \cite[Chapter 1]{FS06} implies  that an optimal control in feedback form is given as the minimizer in \eqref{hjb} and thus
\begin{equation}\label{proj}
 u^*(x)=\cP_{U}\Big(-\frac{1}{2}R^{-1}g(x)^t\nabla V(x)\Big),
\end{equation}
where $\cP_{U}$ is the orthogonal projection in the $R$-weighted inner product on $\R^m$ onto  $U$, i.e.
$$ (R^{-1}g(x)^t\nabla V(x) +2u^*, u-u^*)_R \ge 0 \text{ for all } u \in U.$$
Alternatively $u^*$ can be expressed as
\begin{equation*}
 u^*(x)=R^{-\frac{1}{2}}\cP_{\bar U}\Big(-\frac{1}{2}R^{-\frac{1}{2}}g(x)^t\nabla V(x)\Big),
\end{equation*}
where $\cP_{\bar U}$ is the orthogonal projection in $\R^m$, with Euclidean inner product,  onto $\bar U=R^{\frac{1}{2}}U$. For the particular case of \eqref{eq:kk1} we have
\begin{equation*}
 u^*(x)=\cP_{U}\Big(-\frac{1}{2}R^{-1}g(x)^t\nabla V(x)\Big)=\min\Big\{\beta,\max\{\alpha,-\frac{1}{2}R^{-1}g(x)^t\nabla V(x)\}\Big\},
\end{equation*}
where  the $\max$ and $\min$ operations operate coordinate-wise. It is common practice to refer to the open loop control as in \eqref{cost} and to the closed loop control as in \eqref{proj} by the same letter.

For the unconstrained case the corresponding control law reduces to
 $u^*(x)=-\frac{1}{2}R^{-1}g(x)^t\nabla V(x)$.
 Using the optimal control law $u(x)=\cP_{U}\Big(-\frac{1}{2}R^{-1}g(x)^t\nabla V(x)\Big)$ in \eqref{hjb}, (where we now drop the superscript $^*$) we obtain the equivalent form of the HJB equation as
\begin{equation}\label{hjb1}
\nabla V(x)^t\Bigg(f(x)+g(x)\cP_{U}\Big(-\frac{1}{2}R^{-1}g(x)^t\nabla V(x)\Big)\Bigg)+\ell(x)+\norm{\cP_{U}\Big(-\frac{1}{2}R^{-1} g(x)^t \nabla V(x)\Big)}^2_R=0.
\end{equation}
We shall require the notion of  generalized Hamilton-Jacobi-Bellman (GHJB) equations (see e.g. \cite{SL79, BST7}), given as follows:
 \begin{equation}\label{gHJB}
 GHJB(V,\nabla V;u):=\nabla V^t\Big(f+gu\Big)+\Big(\ell+\norm{u}^2_{R}\Big)=0,\qquad V(0)=0,
 \end{equation}
Here $f,g$ and $u$ are considered as functions of $x\in\R^d$.

\begin{remark}
Concerning the  existence of a solution in the unconstrained case to the  Zubov type equation \eqref{gHJB},
we refer to  \cite[Lemma 2.1, Theorem 1.1]{L69}
where it is shown in the analytic case   that if $(\frac{\partial f}{\partial y}(0),g(0))$ is a stabilizable pair, then \eqref{eq2.3} admits a unique locally positive definite solution $V(x)$, which is locally analytic at the origin.
\end{remark}

For solving \eqref{hjb1} we analyse the  following iterative scheme, which is referred to as policy iteration or Howards' algorithm, see e.g.  \cite{SL79, AFK15, BST7, BMZ09}, where the case without control constraints is treated.
We require the notion of admissible controls, a concept introduced in \cite{LL67, BST7}.
\begin{definition}\label{def1}(Admissible Controls). A measurable function $u:\R^d\to U\subset \R^m$ is called  admissible with respect to  $\Omega$, denoted by  $u \in \mathcal{A}(\Omega)$, if
	\begin{itemize}
		\item[(i)] $u$ is continuous on $\R^m$,
		\item[(ii)] $u(0)=0$,
		\item[(iii)] $u$ stabilizes \eqref{eq1.1} on $\Omega$, i.e. $\lim_{t\to\infty}y(t;u)=0,   \quad \forall x\in \Omega$.
		\item[(iv)] $\int\limits_0^\infty \Big(\ell(y(t;u))+\|u(y(t;u))\|_R^2\Big)\, dt < \infty,  \quad \forall x\in \Omega$.
	\end{itemize}
\end{definition}
Here  $y(t;u)$ denotes the solution to \eqref{eq1.1}, where $x\in \Omega$, and with control in feedback form $u(t) = u(y(t;u))$.  As in \eqref{cost} the value of the cost in (iv) associated to the closed loop control is denoted by $\cJ(x,u)  = \int\limits_0^\infty (\ell(y(t;u))+\|u(y(t;u))\|_R^2)\, dt$.  In general we cannot guarantee that the controlled trajectories $t\to y(t;u(t))$ remain in $\Omega$ for all $t$. For this reason we demand continuity and differentiability properties of $u$ and $V$ on all of $\R^d$. Under additional assumptions we could introduce a set $\tilde \Omega$ with $\Omega \subsetneq \tilde \Omega \subsetneq \R^d$ with the property that
$\{y(t,u) : t\in[0,\infty), u \in \mathcal{A}(\Omega) \} \subset\tilde \Omega$
and demanding the regularity properties  of $u$ and $V$ on $\tilde \Omega$ only. We shall not pursue such a  set-up here.

We are now prepared the present the algorithm.
\begin{algorithm}[!h]
	\begin{algorithmic}
		\STATE{{\bf Input}: Let $u^{(0)}:\R^d\mapsto U\subset \R^m$ be an admissible control law for the  dynamics \eqref{eq1.1} and let   $\epsilon>0$  be a given tolerance.}
		\STATE{\bf While $\sup_{x\in \Omega}|u^{(i+1)}(x)-u^{(i)}(x)|\geq\epsilon$, \;}
		\STATE{\bf Solve for $V^{(i)}\in C^1(\R^d):$
			\begin{align}\label{eq2.3}
			\nabla V^{(i)}(x)^t\big(f(x)+g(x) u^{(i)}(x)\big)+ \ell(x)+\|u^{(i)}(x)\|^2_R=0\, \mbox{ with } V^{(i)}(0)=0\,,
			\end{align}}
		\STATE{\bf Update the Control:
			\begin{align}\label{eq2.3a}
			u^{(i+1)}(x)=\cP_{U}\Big(-\frac{1}{2}R^{-1}g(x)^t\nabla V^{(i)}(x)\Big)\,.
			\end{align}
			End}
		\caption{Continuous policy iteration algorithm for first order HJB equation}\label{alg:polit1}
	\end{algorithmic}
\end{algorithm}

Note that \eqref{eq2.3} can equivalently be expressed as $GHJB(V^{(i)},\nabla V^{(i)};u^{(i)})=0,\,  V^{(i)}(0)=0$.

\begin{lemma}\label{lm1}
Assume that $u(\cdot)$ is an admissible feedback control with respect to  $\Omega$. If there exists a function  $V(\cdot;u)\in C^1(\R^d)$ satisfying
\begin{equation}\label{eqx1.2}
GHJB(V,\nabla V;u)=\nabla V(x;u)^t(f(x)+g(x)u(x))+\ell(x)+\norm{u(x)}^2_R=0, \quad V(0;u)=0,
\end{equation}
then $V(x;u)=\cJ(x,u)$ for all $ x\in \Omega$.
Moreover, the optimal control law $u^*(x)$ is admissible on $\R^d$,  the optimal value function $V(x)$ satisfies $V(x)=\cJ(x,u^*)$, and $0<V(x)\leq V(x;u)$.
\end{lemma}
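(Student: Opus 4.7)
The plan is to use $V(\cdot;u)$ as a Lyapunov-like function along the closed-loop trajectory and to compute its total variation using the GHJB equation. Concretely, fix $x\in\Omega$ and let $y(t;u)$ denote the trajectory of \eqref{eq1.1} under the feedback $u$. Since $V(\cdot;u)\in C^1(\R^d)$, the chain rule gives
\begin{equation*}
\frac{d}{dt}V(y(t;u);u)=\nabla V(y(t;u);u)^{t}\bigl(f(y(t;u))+g(y(t;u))u(y(t;u))\bigr),
\end{equation*}
and substituting \eqref{eqx1.2} along the trajectory turns the right-hand side into $-\ell(y(t;u))-\|u(y(t;u))\|_R^{2}$. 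Integrating from $0$ to $T$ yields
\begin{equation*}
V(x;u)=V(y(T;u);u)+\int_{0}^{T}\bigl(\ell(y(t;u))+\|u(y(t;u))\|_R^{2}\bigr)\,dt.
\end{equation*}

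Next I would let $T\to\infty$. By admissibility (iii), $y(T;u)\to 0$, and since $V(\cdot;u)\in C^{1}(\R^d)$ with $V(0;u)=0$, continuity gives $V(y(T;u);u)\to 0$. Admissibility (iv) ensures the integral converges, so passing to the limit yields $V(x;u)=\cJ(x,u)$ on $\Omega$. In particular, since $\ell(y)>0$ for $y\neq 0$ and $x\neq 0$ forces the trajectory to be nonzero on a set of positive measure, we obtain $V(x;u)>0$ for $x\neq 0$; the same reasoning will give $V(x)>0$ once $u^{*}$ is shown admissible.

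The main obstacle is to prove that $u^{*}$ defined by \eqref{proj} is itself admissible on $\R^{d}$, because only then can the same identification argument be applied with $u^{*}$ replacing $u$ and \eqref{hjb1} replacing \eqref{eqx1.2}. Continuity of $u^{*}$ follows from the continuity of the projection $\cP_U$ and of $\nabla V$, and $u^{*}(0)=0$ follows from $\nabla V(0)=0$ (a consequence of $V\geq 0$ having a minimum at $0$) together with $0\in U$. For stabilization, I would use $V$ as a Lyapunov function: along the trajectory $y^{*}(t)$ driven by $u^{*}$, the HJB equation \eqref{hjb1} gives
\begin{equation*}
\frac{d}{dt}V(y^{*}(t))=-\ell(y^{*}(t))-\|u^{*}(y^{*}(t))\|_R^{2}\leq 0,
\end{equation*}
so $V(y^{*}(t))\leq V(x)$ for all $t\geq 0$. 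Radial unboundedness of $V$ (Assumption \ref{ass1}) then yields a uniform bound on $\|y^{*}(t)\|$; Lipschitz continuity of $f$ and $g$ together with boundedness of $g$ give a uniform bound on $\dot y^{*}(t)$, hence $\ell\circ y^{*}$ is uniformly continuous. Integrating the Lyapunov identity shows $\int_{0}^{\infty}\ell(y^{*}(t))\,dt<\infty$; combined with uniform continuity this forces $\ell(y^{*}(t))\to 0$, and the assumption $\ell(y)>0$ for $y\neq 0$ then yields $y^{*}(t)\to 0$, giving (iii). Finiteness of the cost (iv) falls out of the same integral bound.

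With $u^{*}\in\cA(\R^{d})$ established, applying the trajectory argument of the first paragraph with $u^{*}$ and the HJB equation \eqref{hjb1} (which has the same structure as \eqref{eqx1.2} because $u^{*}(x)$ is precisely the projected minimizer) produces $V(x)=\cJ(x,u^{*})$. The inequality $V(x)\leq V(x;u)$ is then immediate from $V(x)=\inf_{u(\cdot)\in\cU}\cJ(x,u(\cdot))\leq \cJ(x,u)=V(x;u)$, and $V(x)>0$ for $x\neq 0$ by the argument already given, completing the proof.
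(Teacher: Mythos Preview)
Your proof is correct and matches the paper's argument for the first and last parts: the identification $V(x;u)=\cJ(x,u)$ by integrating the GHJB identity along the closed-loop trajectory and letting $T\to\infty$, and the final inequality $V(x)\le V(x;u)$ from optimality once $u^{*}$ is known to be admissible. The one genuine difference is how you establish asymptotic stability of the optimal closed loop, i.e.\ property~(iii) for $u^{*}$. The paper argues by contradiction via a compact sublevel set: since $t\mapsto V(y^{*}(t))$ is strictly decreasing and bounded below, it has a limit $\epsilon\ge 0$; if $\epsilon>0$ the trajectory is confined to the set $S=\{z:\epsilon\le V(z)\le V(x)\}$, which is compact by radial unboundedness, and on $S$ one has $\dot V\le\zeta<0$ by continuity, forcing $V(y^{*}(T))\to-\infty$, a contradiction. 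You instead run a Barbalat-type argument: boundedness of the trajectory together with the Lipschitz and boundedness assumptions on $f$, $g$, $\ell$ yields uniform continuity of $t\mapsto\ell(y^{*}(t))$, and the finiteness of $\int_{0}^{\infty}\ell(y^{*}(t))\,dt$ then forces $\ell(y^{*}(t))\to 0$, hence $y^{*}(t)\to 0$. Both are standard Lyapunov routes; the paper's version is slightly more self-contained and uses only continuity of the data, whereas yours is the argument most control theorists would reach for first and makes explicit use of the Lipschitz hypotheses already assumed in the paper.
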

\begin{proof}
The proof takes advantage, in part, from the verification of an analogous result in  the unconstrained, see eg. \cite{SL79}, where a finite horizon problem is treated.  Let $x\in \Omega$ be arbitrary and fixed, and choose any $T>0$.
Then  we have
\begin{equation}\label{eqaux1}
V(y(T;u);u)-V(x;u)=\int_{0}^{T}\frac{d}{dt}  V( y(t;u))\; dt.
\end{equation}
Since $\lim_{T\to\infty}y(T;u)=0$ by  (iii), and  due to V(0;u)=0, we can take the limit $T\to \infty$ in this equation to obtain
\begin{align}\label{eq1.2}
 V(y(\infty;u);u)-V(x;u)=  -V(x;u)&=\int_{0}^{\infty} \frac{d}{dt} V (y(t;u);u)\; dt\notag\\
 & =\int_{0}^{\infty}\nabla V(y;u)^t(f(y)+g(y) u(y)))\; dt,
\end{align}
where $y=y(t;u)$ on the right hand side of the above equation.
Adding both sides of $\cJ(x,u)=\int_{0}^{\infty} (\ell(y)+\norm{u}^2_R)\; dt$ to the respective sides of \eqref{eq1.2}, we obtain
\begin{align*}
\cJ(x,u)-V(x;u)=\int_{0}^{\infty}\Big(\nabla V(y;u)^t(f(y)+g(y) u(y))+\ell(y)+\norm{u(y)}^2_R\Big) dt,
\end{align*}
and from \eqref{eqx1.2} we conclude  that $V(x;u)=\cJ(x,u)$ for all $x\in \Omega$.

Let us next consider the optimal feedback law  $u^*(x)$. We need to show that it is admissible in the sense of Definition \ref{def1}.
By Assumption \ref{ass1} and \eqref{proj} it is continuous on $\R^d$. Moreover $V(0)=0$, $V(x)> 0$ for all $0\neq x\in \Omega$,  thus $\nabla V(0) = 0$, and consequently $u^*(0)=0$. Here we also use that $0\in U$. Thus (i) and (ii) of Definition \ref{def1} are satisfied for $u^*$ and (iv) follows from our general assumption that \eqref{cost} has a solution for every $x\in \Omega$. To verify (iii), note that from \eqref{eqaux1}, \eqref{proj}, and \eqref{hjb1} we have for every $T>0$
\begin{equation}\label{eqaux2}
V(y(T;u^*)) - V(x)= -\int^T_0 \ell(y(t;u^*)) + \|u^*(y(t;u^*)) \|_R^2 \; dt <0.
\end{equation}
Thus $T \to V(y(T;u^*))$ is strictly monotonically decreasing, (unless $y(T;u^*)=0$ for  some $T$). Thus $\lim_{T\to\infty} V(y(T;u^*))= \epsilon$ for some $\epsilon\ge 0$. If $\lim_{T\to \infty} y(T;u^*) \neq 0$, then there exists $\epsilon >0$ such that $\epsilon \le  V(y(T;u^*)) \le V(x)$ for all $T\ge 0$. Let $S=\{z \in \R^d: \epsilon \le V(z) \le V(x)\}$. Due to continuity of $V$ the set $S$ is closed. The radial unboundedness assumption on $V$ further implies that $S$ is bounded and thus it is compact. Let us set
\begin{equation*}
\dot V(x)=\nabla V(x)^t (f(x)+g(x) \cP_{U}(-\frac{1}{2}R^{-1}g(x)^t\nabla V(x))) \text{ for }x\in\R^d.
\end{equation*}
Then by \eqref{hjb1} we have
\begin{equation*}
\dot V(x)= -\ell(x)-\|{\cP_{U}(-\frac{1}{2}R^{-1} g(x)^t \nabla V(x))}\|^2_R<0.
\end{equation*}
By compactness of $S$ we have $\max_{z\in S} \dot V(z)=:\zeta<0$.
Note that $\{T: y(T;u^*)\}\subset S$. Hence by \eqref{eqaux2} we find
\begin{equation*}
\lim_{T\to \infty} V(y(T;u^*)) - V(x) \le \lim_{T\to \infty} \zeta T,
\end{equation*}
 which is impossible. Hence $\lim_{T\to \infty} y(T;u^*) =0$ and $u^* \in \mathcal{A}(\Omega)$.

  Now we can apply the arguments from the first part of the proof with $u=u^*$ and obtain $V(x)=\cJ(x,u^*) \le \cJ(x,u)=V(x;u)$ for every $u\in\cA(\Omega)$. This concludes the proof.
\end{proof}
\subsection{Convergence of policy iteration}
The previous lemma establishes the fact that the value $\cJ(x,u)$, for a given  admissible control $u$, and  $x\in \Omega$ can  be obtained as the evaluation of the solution of the GHJB equation \eqref{eqx1.2}. In the following lemma we commence the analysis of the convergence of Algorithm \ref{alg:polit1}.
\begin{proposition}\label{lm2.1}
	If $u^{(0)} \in \cA(\Omega)$, then $u^{(i)}\in \cA(\Omega)$ for all $i$.
	Moreover we have $V(x)\leq V^{(i+1)}(x)\leq V^{(i)}(x)$. Further, $\{V^{(i)}(x)\}$ converges from above pointwise to some $\bar V(x) \ge V(x)$ in $\Omega$.
\end{proposition}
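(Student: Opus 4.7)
My plan is by induction on $i$, mirroring the strategy of Lemma \ref{lm1} but replacing the pointwise optimality of the control by the variational characterization of the projection. The base $i=0$ is the hypothesis, and at each inductive step I assume $u^{(i)}\in\cA(\Omega)$ together with $V^{(i)}\in C^1(\R^d)$ solving \eqref{eq2.3}; Lemma \ref{lm1} then identifies $V^{(i)}(x)=\cJ(x,u^{(i)})$ on $\Omega$, and in particular $V^{(i)}\ge V\ge 0$ there. Items (i)--(ii) of Definition \ref{def1} for $u^{(i+1)}$ follow at once from continuity of $\cP_U$ and of $\nabla V^{(i)}$, together with $\nabla V^{(i)}(0)=0$ (as $V^{(i)}$ attains its minimum at $0$) and $0\in U$.

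The crucial ingredient is a one-step Lyapunov estimate for $V^{(i)}$ along $y(\cdot;u^{(i+1)})$. Eliminating $(\nabla V^{(i)})^{t} f$ via \eqref{eq2.3}, adding and subtracting $\|u^{(i+1)}\|_R^2$, and using the identity
\begin{equation*}
\|u^{(i+1)}\|_R^2-\|u^{(i)}\|_R^2 \;=\; 2(u^{(i+1)})^{t}R(u^{(i+1)}-u^{(i)}) \;-\; \|u^{(i+1)}-u^{(i)}\|_R^2,
\end{equation*}
one reduces $\frac{d}{dt}V^{(i)}(y)$ to
\begin{equation*}
-\ell(y)-\|u^{(i+1)}\|_R^2+(u^{(i+1)}-u^{(i)})^{t}\bigl[g(y)^{t}\nabla V^{(i)}(y)+2R\,u^{(i+1)}(y)\bigr]-\|u^{(i+1)}-u^{(i)}\|_R^2.
\end{equation*}
Testing the variational inequality characterizing $u^{(i+1)}=\cP_{U}\!\bigl(-\tfrac12 R^{-1}g^{t}\nabla V^{(i)}\bigr)$ against $u^{(i)}\in U$ forces the bracketed middle term to be nonpositive, so
\begin{equation*}
\frac{d}{dt}V^{(i)}(y(t;u^{(i+1)}))\;\le\;-\ell(y)-\|u^{(i+1)}(y)\|_R^2-\|u^{(i+1)}(y)-u^{(i)}(y)\|_R^2\;\le\;0.
\end{equation*}
Granting radial unboundedness of $V^{(i)}$ on $\R^d$ (inherited from $V^{(i)}\ge V$ and Assumption \ref{ass1}), the sublevel set $\{V^{(i)}\le V^{(i)}(x)\}$ is compact, and the compactness/Lyapunov argument in the second half of the proof of Lemma \ref{lm1} replays verbatim to yield $\lim_{T\to\infty}y(T;u^{(i+1)})=0$, i.e.\ item (iii). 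Integrating the differential inequality over $[0,\infty)$ with $V^{(i)}(0)=0$ then gives $\cJ(x,u^{(i+1)})\le V^{(i)}(x)<\infty$, which is item (iv); hence $u^{(i+1)}\in\cA(\Omega)$. A second application of Lemma \ref{lm1} identifies $V^{(i+1)}(x)=\cJ(x,u^{(i+1)})\le V^{(i)}(x)$ on $\Omega$, while $V(x)\le V^{(i+1)}(x)$ is immediate from $V$ being the infimum of $\cJ(x,\cdot)$. Consequently $\{V^{(i)}(x)\}$ is monotone non-increasing and bounded below by $V(x)$, so it converges pointwise to some $\bar V(x)\ge V(x)$.

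The main obstacle I anticipate is the variational-inequality step itself: in the unconstrained case $u^{(i+1)}$ annihilates $g^{t}\nabla V^{(i)}+2R\,u^{(i+1)}$ exactly, which drives the classical monotone descent of Howards' scheme, whereas under constraints only a one-sided projection inequality is available and the correct test element $u^{(i)}\in U$ must be identified in order to convert it into a sign-definite remainder. The surviving term $-\|u^{(i+1)}-u^{(i)}\|_R^2$ is harmless for monotonicity but quantifies the step-to-step discrepancy between consecutive iterates, a quantity one would expect to exploit in the subsequent step aiming at $\bar V = V$.
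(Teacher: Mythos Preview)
Your proof is correct and follows essentially the same route as the paper: both eliminate $(\nabla V^{(i)})^t f$ via the GHJB equation, rearrange the cross term so that the projection variational inequality (tested against $u^{(i)}\in U$) renders the residual sign-definite, and arrive at precisely the paper's key estimate \eqref{eq:2.14}. Your derivation of monotonicity is in fact slightly more economical---you read off $V^{(i+1)}(x)=\cJ(x,u^{(i+1)})\le V^{(i)}(x)$ directly from integrating the Lyapunov inequality together with Lemma~\ref{lm1}, whereas the paper repeats the computation of $V^{(i+1)}-V^{(i)}$ along $y^{(i+1)}$ separately; and your parenthetical remark on radial unboundedness of $V^{(i)}$ makes explicit an assumption the paper invokes only by reference to ``the arguments as in the last part of the proof of Lemma~\ref{lm1}''.
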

\begin{proof}
We proceed by induction. Given $u^{(0)} \in \cA(\Omega)$, we assume that $u^{(i)}\in \cA(\Omega)$,  and  establish that  $u^{(i+1)}\in \cA(\Omega)$. We shall frequently refer to Lemma \ref{lm1} with $V^{(i)}:=V(\cdot;u^i)$ and $V^{(i)}$ as in \eqref{eq2.3}.
In view of \eqref{eq2.3a}
$u^{(i+1)}$ is continuous since $g$ is continuous and $V^{(i)}\in C^1(\Omega)$.
Using Lemma \ref{lm1} we obtain that $V^{(i)}$ is positive definite. Hence  it attains its minimum at the origin, $\nabla V^{(i)}(0)=0$ and consequently $u^{(i+1)}(0)=0$. Thus (i) and (ii) in the definition of admissibility of $u^{(i+1)}$ are established.

Next we take the time-derivative of $t\to V^{(i)}(y(u^{(i+1)})(t)$ where $y(u^{(i+1)})$ is the trajectory corresponding to
\begin{equation*}
\dot y(t)= f(y(t))+g(y)u^{(i+1)}(y(t))\,,\quad y(0)=x.
\end{equation*}
Let us  recall that ${\nabla V^{(i)}(y)}^tf(y)=-{\nabla V^{(i)}(y)}^tg(y)u^{(i)}(y)-\ell(y)-\norm{u^{(i)}(y)}^2_{R}$,
for $y\in \R^d$ and set  $y^{(i+1)}= y(u^{(i+1)})$. Then we  find
\begin{equation}\label{eq:aux3}
\begin{array}l
\frac{d}{dt} V^{(i)}(y^{(i+1)}(t))={\nabla V^{(i)}(y^{(i+1)}) }^t(f(y^{(i+1)})+g(y^{(i+1)})u^{(i+1)}(y^{i+1})\\[1.7ex]
=-\ell(y^{(i+1)} )\!-\!\norm{u^{(i)}(y^{(i+1)})}^2_{R}
+{\nabla V^{(i)}(y^{(i+1)})}^tg(y^{(i+1)}) \big(u^{(i+1)}(y^{(i+1)})-u^{(i)}(y^{(i+1)})\big).
\end{array}
\end{equation}
Throughout the following computation we do not indicate the dependence on $t$ on the right hand side of the equality.
Next we need to rearrange the terms on the right hand side of the  last expression.
For this purpose it will be convenient to introduce $z=\frac{1}{2}R^{-1} g(y^{(i+1)})^t{\nabla V^{(i)}(y^{(i+1)})}$ and observe that $u^{(i+1)}=\cP_{U}(-z)$. We can express the above equality as
\begin{align*}
\frac{d}{dt} V^{(i)}(y^{(i+1)}(t))&= -\ell(y^{(i+1)} )-\norm{u^{(i)}(y^{(i+1)})}^2_{R} + 2 z^t R \cP_{U}(-z) -2 z^t R \, u^{(i)}(y^{(i+1)})\\
&=-\ell(y^{(i+1)}) - \|\cP_{U}(-z) \|_R^2 - \|u^{(i)}(y^{(i+1)}) - \cP_{U}(-z) \|_R^2\\  &\quad+ 2(z+ \cP_{U}(-z))^tR(\cP_{U}(-z)-u^{(i)}(y^{(i+1)})).
\end{align*}
Since $u^{(i)}(y^{(i+1)})\in U$ we obtain   $(z+ \cP_{U}(-z))^tR(\cP_{U}(-z)-u^{(i)}(y^{(i+1)})) \le 0$, and thus

\begin{equation}\label{eq:2.14}
\frac{d}{dt} V^{(i)}(y^{(i+1)}(t))\le -\ell(y^{(i+1)} )-\|u^{(i+1)}(y^{(i+1)})\|^2_{R} -\|u^{(i+1)}(y^{(i+1)}) -u^{(i)}(y^{(i+1)}) \|^2_{R}.
\end{equation}
Hence $t\to  V^{(i)}(y^{(i+1)}(t))$ is strictly monotonically decreasing. As mentioned above, $V^{i}$ is positive definite. With the arguments as in the last part of the proof of Lemma \ref{lm1}  it follows that $\lim_{t\to \infty} y^{(i+1)}(t)=\lim_{t\to \infty}y(t;u^{(i+1)})=0$. Finally \eqref{eq:2.14} implies that
$0\leq \int_{0}^{\infty}\Big(\ell(y(t;u^{(i+1)}))+\|u^{(i+1)}(y^{(i+1)})(t)\|_R^2\Big)\, dt\leq V^{(i)}(x)$. Since $x\in \Omega$ was chosen arbitrarily it follows that $u^{(i+1)}$  defined in \eqref{eq2.3a} is admissible. Lemma \ref{lm1} further implies that $V^{(i+1)}(x) \ge V(x)$ on $\Omega$. \\

Since for each $x\in \Omega$ the trajectory $y^{(i+1)}$  corresponding to $u^{(i+1)}$  and satisfying
\begin{align}\label{eq2.8}
\dot y&= \Big(f(y)+g(y)u^{(i+1)}\Big), \quad y(0)=x,
\end{align}
is asymptotically stable, the difference between  $V^{(i+1)}(x)$ and $V^{(i)}(x)$ can be obtained as
\begin{align*}
V^{(i+1)}(x)-V^{(i)}(x)
&=\int_{0}^{\infty}\Bigg(\Big(\nabla V^{(i)}(y^{(i+1)})^t(f+gu^{(i+1)})\Big)
-\Big({\nabla V^{(i+1)}}^t(f+gu^{(i+1)})\Big)\Bigg)\; dt,
\end{align*}
where $f$ and $g$ are evaluated at $y^{(i+1)}$.
Utilizing the generalized HJB equation \eqref{eq2.3}, we get
${\nabla V^{(i)}(y^{(i+1)})}^t(f+gu^{(i+1)})={\nabla V^{(i)}(y^{(i+1)})}^tg(u^{(i+1)}-u^{(i)})-(\ell+\norm{u^{(i)}}^2_R)$ and ${\nabla V^{(i+1)}(y^{(i+1)})}^t(f+gu^{(i+1)})=-(\ell+\norm{u^{(i+1)}}^2_R)$.
This leads to
\begin{align*}
V^{(i+1)}(x)-V^{(i)}(x)= \int^\infty_0 \big(  \|u^{(i+1)}\|^2_R-\|u^{(i)}\|^2_R +{\nabla V^{(i)}(y^{(i+1)})}^tg(u^{(i+1)}-u^{(i)})\big)  dt.
\end{align*}
The last two terms in the above integrand appeared in \eqref{eq:aux3} and were estimated in the subsequent steps. We can reuse this estimate and obtain
\begin{align*}
V^{(i+1)}(x)-V^{(i)}(x)\le - \int^\infty_0  \|u^{(i+1)}-u^{(i)}\|^2_R \,dt \le 0.
\end{align*}

Hence,  $\{V^{(i)}\}$ is a monotonically decreasing sequence which is bounded below by the optimal
value function $V$, see Lemma \ref{lm1}. Since $\{V^{(i)}\}$ is a monotonically decreasing sequence and bounded below by $V$, it converges pointwise to some $\bar V \ge V$.
\end{proof}
To show  convergence of $u^{(i)}$ to $\bar u :  = \cP_{U}(-\frac{1}{2}R^{-1}g(x)^t\nabla \bar V(x))$, additional assumptions are needed. This is considered in  the following proposition.  In the literature, for the unconstrained case, one can  find the statement that, based on Dini's theorem,   the monotonically convergent sequence  $\{V^{(i)}\}$ converges uniformly to $\bar V$, if $\Omega$ is compact. This, however, only holds true, once it is argued that $\bar V$ is continuous.
For the following it will be useful to recall that $C^m(\bar\Omega)$, $m\in \N_0$, consists of all functions $\phi\in C^m(\Omega)$ such that $D^{\alpha}\phi$ is bounded and uniformly continuous on $\Omega$ for all multi-index $\alpha$ with $0\leq\alpha\leq m$, see e.g. \cite[pg. 10]{adams2003}.
\begin{proposition}\label{lm3}
If $\Omega$ is bounded, and further  $\{V^{(i)}\}\subset C^1(\Omega)$ satisfy \eqref{eq2.3}, $\bar V\in C^1(\Omega)\cap C(\bar \Omega) $, and $\{\nabla V^{(i)}\}$ is equicontinuous in $\Omega$, then $\{\nabla V^{(i)}\}$ converges pointwise to $\nabla\bar V$ in $\Omega$,
and $\bar V$ satisfies  the HJB equation \eqref{hjb1} for all $x\in\Omega$ with $\bar V(x)=V(x)$ for all $x\in \Omega$.
\end{proposition}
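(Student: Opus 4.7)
The plan is to first establish pointwise convergence of $\{\nabla V^{(i)}\}$ to $\nabla\bar V$, then pass to the limit in the generalized HJB equation to obtain that $\bar V$ satisfies \eqref{hjb1}, and finally use a verification-type argument to upgrade the bound $\bar V\ge V$ from Proposition \ref{lm2.1} into an equality.

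For the gradient convergence, I would first use that each $V^{(i)}$ is positive definite with minimum at the origin by Lemma \ref{lm1}, so $\nabla V^{(i)}(0)=0$. Combining this with the assumed equicontinuity of $\{\nabla V^{(i)}\}$ on the bounded set $\Omega$ yields uniform boundedness of $\{\nabla V^{(i)}\}$ on $\Omega$, and the family extends equicontinuously to $\bar\Omega$. By the Arzel\`a--Ascoli theorem, any subsequence admits a further subsequence $\nabla V^{(i_k)}$ converging uniformly on $\bar\Omega$ to some continuous limit $W$. Fixing $x_0=0$ and, for each $x\in\Omega$, a $C^1$ path $\gamma$ in $\Omega$ from $0$ to $x$, the fundamental theorem of calculus gives $V^{(i_k)}(x)=\int_\gamma \nabla V^{(i_k)}\cdot d\gamma$. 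Passing to the limit using pointwise convergence $V^{(i)}\to\bar V$ on the left-hand side and uniform convergence of $\nabla V^{(i_k)}$ on the image of $\gamma$ on the right-hand side, one obtains $\bar V(x)=\int_\gamma W\cdot d\gamma$. Since $\bar V\in C^1(\Omega)$, this identifies $W=\nabla\bar V$ throughout $\Omega$, and uniqueness of the subsequential limit then forces the whole sequence $\nabla V^{(i)}$ to converge pointwise to $\nabla\bar V$ on $\Omega$.

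For the HJB equation, continuity of the projection $\cP_U$ and the established convergence of $\nabla V^{(i)}$ give $u^{(i)}\to\bar u:=\cP_U(-\tfrac12 R^{-1}g^t\nabla\bar V)$ pointwise via \eqref{eq2.3a}. Passing to the limit in the GHJB identity $\nabla V^{(i)}(x)^t(f(x)+g(x)u^{(i)}(x))+\ell(x)+\|u^{(i)}(x)\|_R^2=0$ yields the analogous identity for $\bar V$ and $\bar u$, which coincides with \eqref{hjb1} because $\bar u(x)$ is by construction the minimizer of $v\mapsto \nabla\bar V(x)^tg(x)v+\|v\|_R^2$ over $v\in U$.

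For the identification $\bar V=V$, Proposition \ref{lm2.1} already supplies $\bar V\ge V$ on $\Omega$. The reverse inequality follows from a verification argument using the HJB equation now satisfied by $\bar V$: for any admissible $u\in\cA(\Omega)$ with trajectory $y(\cdot;u)$ one has $\nabla\bar V(y)^t(f(y)+g(y)u(y))+\ell(y)+\|u(y)\|_R^2\ge 0$, so $\tfrac{d}{dt}\bar V(y(t;u))\ge -\ell(y)-\|u(y)\|_R^2$. Integrating over $[0,\infty)$ and using $\bar V(0)=\lim V^{(i)}(0)=0$ together with $y(\infty;u)=0$ gives $\bar V(x)\le\cJ(x,u)$; taking the infimum over admissible $u$ produces $\bar V(x)\le V(x)$. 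The main obstacle, as I see it, is the gradient-convergence step: deducing convergence of the derivatives from pointwise convergence of the potentials and equicontinuity requires the Arzel\`a--Ascoli plus path-integral identification to be set up carefully, and relies in an essential way on the assumed $C^1$-regularity of $\bar V$ to pin down $W=\nabla\bar V$ rather than merely some function with the same path integrals.
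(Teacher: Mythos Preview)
Your proposal is correct, and the overall architecture (gradient convergence, passage to the limit in the GHJB identity, identification $\bar V=V$) matches the paper's. The substantive difference is in how you establish the gradient convergence. The paper argues more directly: it first invokes Dini's theorem (using the monotonicity from Proposition~\ref{lm2.1} together with $\bar V\in C(\bar\Omega)$) to upgrade pointwise convergence of the $V^{(i)}$ to uniform convergence on $\bar\Omega$, and then, for fixed $x$, splits $\partial_{x_k}\bar V(x)-\partial_{x_k}V^{(i)}(x)$ into three terms by inserting a finite-difference quotient at step $\delta$ for both $\bar V$ and $V^{(i)}$; equicontinuity controls the two derivative-versus-difference-quotient terms, and the uniform convergence of the potentials controls the middle term. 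Your Arzel\`a--Ascoli plus path-integral identification is a genuinely different route: it is more conceptual, does not need Dini, and in fact delivers uniform convergence of the gradients on $\bar\Omega$ (since every subsequence has a further subsequence converging uniformly to the same limit $\nabla\bar V$). The price is the additional chain/uniform-boundedness argument from the anchor $\nabla V^{(i)}(0)=0$, which tacitly uses that $\Omega$ is a connected domain containing the origin. For the final identification $\bar V=V$, the paper simply cites viscosity-solution uniqueness results from \cite{FS06,BCD97}, whereas your verification inequality gives a self-contained alternative; both are acceptable under the paper's standing regularity assumptions.
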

\begin{proof} Let $x\in \Omega$ and $\epsilon>0$ be arbitrary. Denote by $e_k$ the $k$-th unit vectors, $k=1,\hdots,d,$ and choose $\delta_1>0$  such that $S_{x}=\{x+h:|h| \le \delta_1\}\;\subset\Omega$.
 By continuity of $\nabla \bar V$ and equicontinuity of $\nabla  V^{(i)}$ in $ \Omega$, there exists $\delta\in(0,\delta_1)$ such
\begin{align}\label{eq1}
\|\nabla\bar V(x)-\nabla\bar V(x+h)\|<\frac{\epsilon}{3} \quad \text{and}\quad \|\nabla V^{(i)}(x)-\nabla V^{(i)}(x+h)\|<\frac{\epsilon}{3},
\end{align}	
with for all $h$ with	$|h|\leq \delta$, $k=\{1,\hdots,d\}$, $i=1,2,\hdots$.

By assumption $\Omega$ is bounded and thus $\bar \Omega$ is compact. Hence by Dini's theorem $V^{(i)}$ converges to $\bar V$ uniformly  on $\bar \Omega$. Here we use the assumption that  $\bar V\in C(\bar \Omega)$.  We can now choose
 $\bar i>0$	such that
\begin{align}\label{eq2}
|\bar V(y)- V^{(i)}(y)|\leq \frac{\epsilon\delta}{6}\quad \forall\, y\in \Omega,\quad \text{and } \forall \, i\geq \bar i.
\end{align}
We have
	\begin{align*}
&\partial_{x_k}\bar V(x)-\partial_{x_k} V^{(i)}(x)\\
&=\Big(\partial_{x_k}\bar V(x)-\frac{1}{\delta}\big(\bar V(x+e_k\delta)-\bar V(x)\big)\Big)+\frac{1}{\delta}\Big(\big(\bar V(x+e_k\delta)-\bar V(x)\big)-\big( V^{(i)}(x+e_k\delta)- V^{(i)}(x)\big)\Big)\\
&\qquad+\Big(\frac{1}{\delta}\big( V^{(i)}(x+e_k\delta)- V^{(i)}(x)\big)-\partial_{x_k} V^{(i)}(x)\Big)=:I_1+I_2+I_3.
	\end{align*}
	We estimate $I_1$ and $I_3$ by using \eqref{eq1} as
\begin{align*}
|I_1|=\frac{1}{\delta}|\int_{0}^{1}\Big(\partial_{x_k}\bar V(x)-\partial_{x_k}\bar V(x+e_k\sigma\delta)\Big) \delta\;d\sigma|\leq \frac{\epsilon}{3},
\end{align*}
and	
\begin{align*}
|I_3|=\frac{1}{\delta}|\int_{0}^{1}\Big(\partial_{x_k} V^{(i)}(x+e_k\sigma\delta)-\partial_{x_k} V^{(i)}(x)\Big) \delta\;d\sigma|\leq \frac{\epsilon}{3}.
\end{align*}
We estimate $I_2$ by using \eqref{eq2}
$$|I_2|\leq \frac{\epsilon}{3},$$ and combining with the estimates for $I_1$ and $I_3$, we obtain
\begin{equation}\label{eq3}
\|\nabla\bar V(x)-\nabla V^{(i)}(x)\|\leq \epsilon\sqrt d.
\end{equation}
Since $x\in \Omega$ was arbitrary this implies that
$$\nabla V^{i}\to \nabla \bar V \quad\text{pointwise in} \quad \Omega.$$
It then follows from \eqref{eq2.3a} that
$$\lim_{i\to\infty}\cP_{U}\Big(-\frac{1}{2}R^{-1}g(x)^t\nabla V^{(i)}(x)\Big)=\lim_{i\to\infty}u^{(i+1)}(x)=\cP_{U}\Big(-\frac{1}{2}R^{-1}g(x)^t\nabla\bar V(x)\Big)=:\bar u(x), \hspace{0.1cm} \text{in}\hspace{0.1cm}\Omega,$$
and by \eqref{eq2.3}
\begin{equation*}
\nabla\bar V(x)^t(f(x)+g\bar u)+\ell(x)+\norm{\bar u}^2_R=0, \quad \forall x\in \Omega.
\end{equation*}
For uniqueness of value function $\bar V=V$, we refer to \cite[pg. 86]{FS06} and \cite[Chapter III]{BCD97}.
\end{proof}

While the assumptions of Proposition \ref{lm3} describe a sufficient conditions for
pointwise convergence of $\nabla V^{(i)}$,
is appears to be a challenging open issue to  check them in practice.
%
%
%

\section{Nonlinear $\cH_2$ control subject to stochastic system}
\subsection{Second order Hamilton-Jacobi-Bellman equation}
Here we consider the stochastic  infinite horizon  optimal control problem
\begin{align}\label{scost}
\underset{u(\cdot)\in\cU}{\min}\;\cJ(x,u(\cdot)):=\E\int\limits_0^\infty \Big(\ell(y(t))+\norm{u(t)}^2_{R}\Big)\, dt,
\end{align}
subject to the nonlinear stochastic dynamical constraint
 \begin{align}\label{eq2.5}
d y(t)&= \Big(f(y(t))+g(y)u(t)\Big)dt+g_1(y)\hspace{0.1cm}dW \,,\quad y(0)=x\in \R^d,
\end{align}
where $y(t)\in \R^d$ is the state vector, $W(t)\in \R^k$
is a standard multi-dimensional separable Wiener process
 defined on a complete probability space, and the control input $u(\cdot)\in \cU$  is an adapted process with respect to a natural filter.
The functions $f,\,g,$ and $g_1$  are assumed to be Lipschitz continuous on $\R^d$,  and satisfy $f(0)=0$, $g(0)=0$ and $g_1(0)=0$.
For  details about existence and uniqueness for \eqref{eq2.5}, we refer to  e.g. \cite[Chapter 2]{M07}.
  As in the previous section $\Omega$ denotes a domain in $\R^d$ containing the origin. \\

  The
value function $V(x)=\underset{u(\cdot)\in U}{\inf}\cJ(x,u(\cdot))$  is assumed to be  $C^2$- smooth over $\R^d$.
 Then the HJB equation corresponding to \eqref{scost} and \eqref{eq2.5} is of the form
 \begin{equation}\label{hjbs}
 \underset{u\in U}{\min}\Big\{ \nabla V(x)^t(f(x)+g(x) u)+\frac{1}{2}Tr[g_1(x)^t\frac{\partial^2 V(x)}{\partial x^2}g_1(x)]+\Big( \ell(x)+\norm{u}^2_R\Big)\Big\}=0\,,\quad V(0)=0\,.
 \end{equation}
From the verification theorem \cite[Theorem 3.1]{FS06}, the explicit minimizer $u^*$ of \eqref{hjbs} is given by
 \begin{align}\label{optc}
 u^*(x)&=\underset{u\in U}{argmin}\Big\{ \nabla V(x)^t(f(x)+g u)+\frac{1}{2}Tr[g_1(x)^t\frac{\partial^2 V(x)}{\partial x^2}g_1(x)]+\Big( \ell(x)+\norm{u}^2_R\Big)\Big\}\notag\\
 &=\cP_U\Big(-\frac{1}{2}R^{-1} g(x)^t \nabla V(x)\Big)\,,
 \end{align}
 where the  projection $\cP_{U}$ is defined as in \eqref{proj}.
 The infinitesimal differential generator of the stochastic system \eqref{eq2.5} for the  optimal pair $(u,V)$ (dropping the superscript *) is denoted by
 $$\cL_uV(x)=\nabla V(x)^t\Big(f(x)+g(x)u(x)\Big)+\frac{1}{2}Tr[g_1(x)^t\frac{\partial^2 V(x)}{\partial x^2}g_1(x)].$$
 Using the optimal control law \eqref{optc} in \eqref{hjbs}, we obtain an equivalent form of the HJB equation
 \begin{align}\label{hjb2}
 \nabla V(x)^t&\Bigg(f(x)+g(x)\cP_U\Big(-\frac{1}{2}R^{-1}g(x)^t\nabla V(x)\Big)\Bigg)+\ell(x)\notag\\
 &+\norm{\cP_U\Big(-\frac{1}{2}R^{-1} g(x)^t \nabla V(x)\Big)}^2_R+\frac{1}{2}Tr[g_1(x)^t\frac{\partial^2 V(x)}{\partial x^2}g_1(x)]= 0\,.
 \end{align}
 For the unconstrained control, the above HJB equation becomes
 \begin{equation}\label{hjbun2}
 \nabla V(x)^t f(x)-\frac14\nabla V(x)^tg(x)R^{-1} g(x)^t \nabla V(x)+\frac{1}{2}Tr[g_1(x)^t\frac{\partial^2 V(x)}{\partial x^2}g_1(x)]+\ell(x)=0\,.
 \end{equation}
 The associated generalized second order  GHJB equation is of the form
 \begin{align}
 GHJB(V,\nabla V,\nabla^2V;u)&=0,\qquad V(0)=0,\label{gHJB1} \text{where}\\
 GHJB(V,\nabla V,\nabla^2V;u)&:=\nabla V^t\Big(f+gu\Big)+\frac{1}{2}Tr[g_1^t\nabla^2Vg_1]+\Big(\ell+\norm{u}^2_{R}\Big)\notag.
 \end{align}
We next define the admissible feedback controls with respect to stochastic set up.
 \begin{definition}\label{def2}(Admissible Controls). A  control $u$ is defined to be admissible with respect to \eqref{scost}, denoted by  $u \in \mathcal{A}(\Omega)$ if
 	\begin{itemize}
 		\item[(i)] $u$ is continuous on $\R^m$,
 		\item[(ii)] $u(0)=0$,
 		\item[(iii)] $u$ stabilizes \eqref{eq2.5} on $\Omega$ stochastically, i.e. $P(\lim_{t\to\infty}y(t;u)=0)=1,   \quad  \forall \ x\in \Omega$ i.e. when $\E(\lim_{t\to\infty}y(t;u))=0$.
 		\item[(iv)] $\E\int\limits_0^\infty \Big(\ell(y(t;u))+\|u(y(t;u))\|_R^2\Big)\, dt < \infty,  \quad \forall \; x\in \Omega$.
 	\end{itemize}
 \end{definition}
In Algorithm \ref{alg:sga2}, the policy iteration for the second order HJB equation is documented, see \cite{WS02}.

 \begin{algorithm}[!h]
 	Let $u^{(0)}$ be an initial stabilizing control law in the stability region $\Omega$ for system \eqref{eq2.5}.
 	\begin{algorithmic}
 		\STATE{\bf For $i=0:\infty$\;}
 		\STATE{\bf While $\sup_{x\in\Omega}|u^{(i+1)}(x)-u^{(i)}(x)|\geq \text{tolerance}$, \;}
 		\STATE{\bf Solve for $V^{(i)}$:
 			\begin{align}\label{eq2.13}
 		\cL_{u^{(i)}(x)}V^{(i)}(x)+\ell(x)+&\norm{u^{(i)}(x)}^2_{R}=0\,,\, V^{(i)}(0)=0,
 			\end{align}
 		}
 		
 		\bf {End
 			\STATE{Update the Control:
	\begin{equation}\label{eqx}
 		u^{(i+1)}(x)=\cP_U \Big(-\frac{1}{2}R^{-1}g^t\nabla V^{(i)}(x))\Big)\,.
 \end{equation}
 				\;}
 			End}
 		\caption{Policy iteration algorithm for the second order HJB equation}\label{alg:sga2}
 	\end{algorithmic}
 \end{algorithm}
%
\begin{lemma}\label{lm2.2}
	Assume that $u(\cdot)$ is an admissible feedback control in $\Omega$. If there exist a function $V(\cdot;u)\in C^2(\mathbb{R}^d)$ satisfying
	\begin{equation}\label{eqx2.2}
	GHJB(V,\nabla V,\nabla^2V;u)=\cL_uV(x)+\ell(x)+\norm{u(x)}^2_R=0, \quad V(0;u)=0,
	\end{equation}
	then $V(x;u)$ is the value function of the system \eqref{eq2.5} and $V(x;u)=J(x,u)$ $\forall x\in \Omega$.
	Moreover, the optimal control law $u^*(x)$  is admissible and the optimal value function $V(x;u^*)$, if it exists and is radial unbounded, satisfies $V(x;u^*)=\cJ(x,u^*)$, and $0<V(x;u^*)\leq V(x;u)$.
\end{lemma}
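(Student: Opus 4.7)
The plan is to mirror the structure of the proof of Lemma \ref{lm1}, replacing the fundamental theorem of calculus by Itô's formula and using that the stochastic integral term is a martingale, hence has zero expectation. Concretely, for fixed $x\in\Omega$, $T>0$, and $u\in \cA(\Omega)$, I would apply Itô's formula to $t\mapsto V(y(t;u);u)$ along the SDE \eqref{eq2.5}. This yields
\begin{align*}
V(y(T;u);u) - V(x;u) = \int_0^T \cL_u V(y(t;u);u)\,dt + \int_0^T \nabla V(y(t;u);u)^t g_1(y(t;u))\,dW(t).
\end{align*}
Taking expectation eliminates the stochastic integral (it is a martingale once one argues local boundedness of the integrand combined with a standard localization/stopping-time argument), so that
\begin{align*}
\E\,V(y(T;u);u) - V(x;u) = \E\int_0^T \cL_u V(y(t;u);u)\,dt.
\end{align*}

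Next I would substitute the GHJB equation \eqref{eqx2.2}, namely $\cL_u V = -\ell - \|u\|_R^2$, into the right-hand side, producing
\begin{align*}
\E\,V(y(T;u);u) - V(x;u) = -\E\int_0^T\bigl(\ell(y(t;u)) + \|u(y(t;u))\|_R^2\bigr)\,dt.
\end{align*}
Admissibility condition (iii) in Definition \ref{def2} together with $V(0;u)=0$ and continuity of $V$ allow passage to the limit $T\to\infty$ on the left (using also that the integrand on the right is of class $L^1$ by (iv)), and we conclude $V(x;u)=\cJ(x,u)$, for every $x\in\Omega$.

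For the optimal control $u^*$, I would verify the four properties of Definition \ref{def2}. Continuity on $\R^m$ and $u^*(0)=0$ follow from \eqref{optc}, the $C^2$-regularity of $V(\cdot;u^*)$, positive definiteness (which forces $\nabla V(0;u^*)=0$), and $0\in U$. Finiteness of the cost under $u^*$ follows from our standing existence assumption. The nontrivial point is stochastic asymptotic stability (iii). To obtain this, I would use $V(\cdot;u^*)$ as a stochastic Lyapunov function: by construction $\cL_{u^*}V = -\ell - \|u^*\|_R^2 \le 0$, so $V(\cdot;u^*)$ is a nonnegative supermartingale along the closed-loop dynamics; combined with radial unboundedness this yields $P(\lim_{t\to\infty}y(t;u^*)=0)=1$ by Kushner's stochastic Lyapunov stability theorem (the stochastic analogue of the compactness-based argument at the end of Lemma \ref{lm1}). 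With $u^*\in\cA(\Omega)$ established, applying the first part of the lemma with $u:=u^*$ gives $V(x;u^*) = \cJ(x,u^*)\le \cJ(x,u) = V(x;u)$ for any admissible $u$, and $V(x;u^*)>0$ for $x\ne 0$.

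The main obstacle I anticipate is the rigorous passage from the supermartingale property to almost-sure convergence of trajectories, i.e. the stochastic stabilization step (iii) for $u^*$. Unlike the deterministic case, where compactness of sublevel sets and a strictly negative maximum of $\dot V$ suffice, in the stochastic setting one has to invoke an appropriate stochastic Lyapunov/invariance principle (and, if stopping times are used, check uniform integrability to remove them in the limit). The martingale-vanishing step also requires care: one typically introduces stopping times $\tau_N=\inf\{t:|y(t;u)|\ge N\}$, verifies the identity on $[0,T\wedge\tau_N]$, and then sends $N\to\infty$ using the admissibility properties, which is routine but must be mentioned explicitly.
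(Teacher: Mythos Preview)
Your proposal is correct and follows essentially the same route as the paper: apply Itô's formula to $t\mapsto V(y(t;u);u)$, take expectations to kill the stochastic integral, use the GHJB identity $\cL_uV=-\ell-\|u\|_R^2$ together with admissibility to pass $T\to\infty$ and obtain $V(x;u)=\cJ(x,u)$; then verify (i), (ii), (iv) for $u^*$ as in Lemma~\ref{lm1} and invoke a stochastic Lyapunov theorem (the paper cites \cite[Theorems 2.3--2.4]{M07}, which is exactly the result you describe as Kushner's theorem) for (iii). If anything, your discussion of the localization/stopping-time and uniform-integrability issues is more explicit than what the paper records.
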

\begin{proof}
By It\^{o}'s formula \cite[Theorem 6.4]{M07} for any $T>0$
\begin{align*}
&V(y(T);u)-V(x;u)\\
&=\int_{0}^{T}d V(y(t;u);u)\\
&=\int_{0}^{T}\Big(\nabla V(y;u))^t(f(y)+g(y)u(y))+Tr[\frac{1}{2}g_1(y)^t\frac{\partial^2 V(y;u))}{\partial y^2}g_1(y)]\Big)\; dt\\
&\qquad+\int_{0}^{T}\nabla V(y;u))^tg_1(y)\;dW.
\end{align*}
Taking the expectation  with limit as $T\to \infty$ on both sides, the above equation becomes
\begin{align}\label{eq4.1}
	\E\;(\lim_{T\to\infty}V(y(T);u))-V(x;u)
	&=-V(x;u)=\E\int_{0}^{\infty}\cL_{u(y)}V(y)\; dt,
\end{align}
where $\E\int_{0}^{t}\nabla V(y;u)^tg_1(y)\;dW=0$ for each time $t\geq 0$, with $\E(\lim_{T\to\infty}V(y(T);u))=0$.
Adding $\cJ(x,u)=\E\int_{0}^{\infty} (\ell(y)+\norm{u(y)}^2_R)\; dt$ to \eqref{eq4.1}, we obtain
\begin{align*}
	\cJ(x,u)-V(x;u)&=\E\int_{0}^{\infty}\Big(\cL_{u(y)}V(y)+\ell(y)+\norm{u(y)}^2_R\Big) dt.
\end{align*}
Hence $V(x;u)=\cJ(x,u)$ for all $x\in\Omega$.\\
Concerning the admissibility of $u^*$, properties $(i)$, $(ii)$ and $(iv)$ in Definition \ref{def2}, can be shown similarly as in  Lemma \ref{lm1}. Since the stochastic Lyapunov functional $V$ is radially unbounded by assumption, and its differential generator $\cL_{u^*}V$ satisfies $\cL_{u^*}V(x)=-(\ell(x)+\norm{u^*}^2_R)$ as well as $$\E V(y(T;u^*))-V(x)=-\E\int_{0}^{T}\Big(\ell(y)+\norm{u^*}^2_R\Big)\; dt,\,\,\text{for each}\; T>0,$$ one can follow the standard argument given in Theorem \cite[Theorem 2.3-2.4]{M07} to obtain\\
$P(\lim_{t\to\infty}y(t;u^*)=0)=1,   \quad \forall \ x\in \Omega$ and hence $u^*\in \cA(\Omega)$.
\end{proof}
\subsection{Convergence of policy iteration}

Here we turn to the  convergence analysis of the iterates $V^{(i)}$.

\begin{proposition}\label{lm2.3}
	If $u^{(0)} \in \cA(\Omega)$, then $u^{(i)}\in \cA(\Omega)$ for all $i$.
Further we have $V(x)\leq V^{(i+1)}(x)\leq V^{(i)}(x)$,
where $V$ satisfies the HJB equation \eqref{hjb2}. Moreover $\{V^{(i)}\}$ converges pointwise to some $\bar V\geq V$ in $\Omega$.
\end{proposition}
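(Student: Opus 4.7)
The plan is to imitate the inductive argument of Proposition \ref{lm2.1}, replacing the chain rule with It\^o's formula and averaging out the martingale part via expectation. Assuming $u^{(i)}\in\cA(\Omega)$, Lemma \ref{lm2.2} identifies $V^{(i)}$ with $\cJ(\cdot,u^{(i)})$; positive definiteness then forces $\nabla V^{(i)}(0)=0$, so $u^{(i+1)}(0)=\cP_U(0)=0$, while continuity of $u^{(i+1)}$ follows from continuity of $g$, the $C^2$-regularity of $V^{(i)}$ and nonexpansiveness of $\cP_U$. This establishes (i) and (ii) of Definition \ref{def2}.

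Next I would derive a stochastic dissipation estimate for $V^{(i)}$ along $y^{(i+1)}$. Using \eqref{eq2.13} to substitute for the drift part of $\cL_{u^{(i)}}V^{(i)}$, the generator along the updated closed loop becomes
\begin{equation*}
\cL_{u^{(i+1)}}V^{(i)}(x)=-\ell(x)-\norm{u^{(i)}(x)}^2_R+\nabla V^{(i)}(x)^t g(x)\bigl(u^{(i+1)}(x)-u^{(i)}(x)\bigr).
\end{equation*}
Setting $z=\tfrac12 R^{-1}g(x)^t\nabla V^{(i)}(x)$, so that $u^{(i+1)}=\cP_U(-z)$, the variational inequality $(z+\cP_U(-z))^t R(\cP_U(-z)-u^{(i)})\le 0$ already exploited in the deterministic case gives
\begin{equation*}
\cL_{u^{(i+1)}}V^{(i)}(x)\le -\ell(x)-\norm{u^{(i+1)}(x)}^2_R-\norm{u^{(i+1)}(x)-u^{(i)}(x)}^2_R.
\end{equation*}
Applying It\^o's formula on $[0,T]$ and taking expectation, the stochastic integral vanishes and I obtain
\begin{equation*}
\E V^{(i)}(y^{(i+1)}(T))-V^{(i)}(x)\le -\E\!\int_0^T\!\bigl(\ell(y^{(i+1)})+\norm{u^{(i+1)}}^2_R+\norm{u^{(i+1)}-u^{(i)}}^2_R\bigr)\,dt.
\end{equation*}

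Since $V^{(i)}$ is positive definite and, being bounded below by the radially unbounded optimal value function $V$, is itself radially unbounded, this dissipation identity qualifies $V^{(i)}$ as a stochastic Lyapunov function for the closed loop driven by $u^{(i+1)}$. Invoking the stochastic Lyapunov criterion of \cite[Theorem 2.3--2.4]{M07}, exactly as in the final part of the proof of Lemma \ref{lm2.2}, yields $P(\lim_{t\to\infty}y^{(i+1)}(t)=0)=1$ for every $x\in\Omega$, which is property (iii); letting $T\to\infty$ in the above estimate gives property (iv), closing the induction $u^{(i+1)}\in\cA(\Omega)$. Lemma \ref{lm2.2} then delivers $V^{(i+1)}=\cJ(\cdot,u^{(i+1)})\ge V$ on $\Omega$.

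For the monotonicity $V^{(i+1)}\le V^{(i)}$, I would repeat the argument from Proposition \ref{lm2.1}: apply It\^o to $V^{(i+1)}-V^{(i)}$ along $y^{(i+1)}$, use the GHJB equations \eqref{eq2.13} for both indices together with the same projection estimate as above, take expectation and let $T\to\infty$, to obtain
\begin{equation*}
V^{(i+1)}(x)-V^{(i)}(x)\le -\E\!\int_0^\infty\!\norm{u^{(i+1)}-u^{(i)}}^2_R\,dt\le 0.
\end{equation*}
The resulting monotone decreasing sequence $\{V^{(i)}(x)\}$, bounded below by $V(x)$, then converges pointwise to some $\bar V\ge V$. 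The main technical obstacle is justifying the stochastic-calculus manipulations globally: the integral $\int_0^T \nabla V^{(i)}(y)^t g_1(y)\,dW$ is a priori only a local martingale, and the passage $T\to\infty$ requires either a growth control on $\nabla V^{(i)}\cdot g_1$ along closed-loop trajectories or a localization-and-Fatou argument to promote the almost-sure decay of $y^{(i+1)}(T)$ into the uniform-integrability statement $\E V^{(j)}(y^{(i+1)}(T))\to 0$; this subtlety is entirely absent from the deterministic setting of Proposition \ref{lm2.1}.
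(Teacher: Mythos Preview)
Your proposal is correct and follows essentially the same route as the paper: induction on $i$, rewriting $\cL_{u^{(i+1)}}V^{(i)}$ via the GHJB equation \eqref{eq2.13}, the projection variational inequality to obtain the dissipation bound, the stochastic Lyapunov theorem \cite[Theorems 2.3--2.4]{M07} for (iii), and then It\^o along $y^{(i+1)}$ combined with both GHJB identities for the monotonicity. The technical caveat you raise about the stochastic integral being a priori only a local martingale and the passage $T\to\infty$ requiring uniform integrability is well taken; the paper glosses over this point as well.
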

\begin{proof}
	First we show that if  $u^{(i)}\in \cA(\Omega)$, then $u^{(i+1)}\in \cA(\Omega)$.
	As $g$ is continuous and $V^{(i)}\in C^1(\Omega)$, we have $u^{(i+1)}$ is continuous by \eqref{eqx}.
Since $V^{(i)}$ is positive definite, it attains a minimum at the origin and hence $\nabla V^{(i)}(0)=0$ and consequently $u^{(i+1)}(0)=0$.
The infinitesimal generator 	$\cL_uV(y)$ evaluated along the trajectories generated by $u^{(i+1)}$ becomes
	\begin{align}\label{eqx5.1}
	\cL_uV^{(i)}(y;u^{(i+1)})&={\nabla V^{(i)}}^t(f(y)+g(y)u^{(i+1)})+\frac{1}{2}Tr[g_1(y)^t\frac{\partial^2 V^{(i)}(y)}{\partial y^2}g_1(y)]\notag\\
	&=-\ell(y)-\norm{u^{(i)}}^2_{R}+{\nabla V^{(i)}}^tg(u^{(i+1)}-u^{(i)}),
	\end{align}
	where ${\nabla V^{(i)}}^tf+\frac{1}{2}Tr[g_1^t\frac{\partial^2 V^{(i)}}{\partial y^2}g_1]=-{\nabla V^{(i)}}^tgu^{(i)}-\ell(y)-\norm{u^{(i)}}^2_{R}$.\\
	Now we can calculate $\cL_uV^{(i)}(y;u^{(i+1)})$ as in Proposition \ref{lm2.1} to show that $\cL_uV^{(i)}(y;u^{(i+1)})<0$.
	By the stochastic Lyapunov theorem \cite[Theorem 2.4]{M07}, it follows that $P(\lim_{t\to\infty}y(t;u^{(i+1)})=0)=1$, i.e. $\E(\lim_{t\to\infty}y(t;u^{i+1}))=0$.
	Since $\cL_uV^{(i)}(y;u^{(i+1)})<0$, we find
	\begin{align}
	\E\int_{0}^{T}\Big(\ell(y(t;u^{(i+1)}))+\norm{u^{(i+1)}}^2_R\Big)dt<V^{(i)}(x)\quad \forall\; T>0,
	\end{align}
and thus $u^{(i+1)}$ is admissible.\\
Using that $dy= \Big(f(y)+g(y)u^{(i)}\Big)dt+g_1(y)\hspace{0.1cm}dW$ is asymptotically stable for all $i$, by It\^{o}'s formula along the trajectory
\begin{align}\label{eq2.11}
d y&= \Big(f(y)+g(y)u^{(i+1)}\Big)dt+g_1(y)\hspace{0.1cm}dW,
\end{align}
the difference   $V^{(i+1)}(x)-V^{(i)}(x)$ can be obtained as
\begin{align*}
&V^{(i+1)}(x;u^{(i+1)})-V^{(i)}(x;u^{(i+1)})\\
&=\E\int_{0}^{\infty}\Bigg(\Big({\nabla V^{(i)}}^t(f+gu^{(i+1)})+\frac{1}{2}Tr[g_1^t\frac{\partial^2 V^{(i)}}{\partial y^2}g_1]\Big)\\
&\qquad-\Big({\nabla V^{(i+1)}}^t(f+gu^{(i+1)})+\frac{1}{2}Tr[g_1^t\frac{\partial^2 V^{(i+1)}}{\partial y^2}g_1]\Big)\Bigg) dt =: \E \int_{0}^{\infty}(A-B)\; dt.
\end{align*}
Using the generalized HJB equation \eqref{eq2.13} for two consecutive  iterations $(i)$ and $(i+1)$, we have
$\frac{1}{2}Tr[g_1^t\frac{\partial^2 V^{(i)}}{\partial y^2}g_1]=-{\nabla V^{(i)}}^t(f+gu^{(i)})-(\ell+\norm{u^{(i)}}^2_R)$ and ${\nabla V^{(i+1)}}^t(f+gu^{(i+1)})+\frac{1}{2}Tr[g_1^t\frac{\partial^2 V^{(i+1)}}{\partial y^2}g_1]=-(\ell+\norm{u^{(i+1)}}^2_R)$ respectively.
 Applying $u^{(i+1)}(y)=\cP \Big(-\frac{1}{2}R^{-1}g^t\nabla V^{(i)}(y))\Big)$, we obtain finally
\begin{align*}
A-B=-\Big(\norm{u^{(i)}}^2_R-\norm{u^{(i+1)}}^2_R\Big)+{\nabla V^{(i)}}^tg(u^{(i+1)}-u^{(i)}),
\end{align*}
where $A-B$ can be calculated as in Proposition \ref{lm2.1} to obtain $V^{(i+1)}(x)\leq V^{(i)}(x)$.
Further, $V(x)\leq V^{(i+1)}(x)$. Therefore, $\{V^{(i)}\}$ converges pointwise to some $\bar V$.
\end{proof}
Now, in addition if $\Omega$ is compact and   $\bar V$ is continuous, then by Dini's theorem $\{V^{(i)}\}$ converges uniformly to $\bar V$.
\begin{proposition}\label{lm2.4}
Let $\Omega$ be a bounded domain. If
$\{V^{(i)}\}\in C^2(\Omega)$ satisfy \eqref{eq2.13},  $\bar V\in C^2(\Omega)\cap C^1(\bar\Omega)$, and $\{\nabla^{m}V^{(i)}\}$ is equicontinuous, then $\{\nabla^{m}V^{(i)}\}$ converges pointwise to $\nabla^{m}\bar V$ in $\Omega$, for $m=1$, $2$,
and $\bar V$ satisfies the HJB equation \eqref{hjb2} with $\bar V=V$.
\end{proposition}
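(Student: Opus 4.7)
The plan is to extend the strategy of Proposition \ref{lm3} to the second-derivative level via a bootstrapping argument. The proof proceeds in three stages: first, establish uniform convergence of $V^{(i)}$ on $\bar\Omega$; second, pass from this to pointwise convergence of $\nabla V^{(i)}$ and subsequently of $\nabla^2 V^{(i)}$; third, pass to the limit in the second-order GHJB equation \eqref{eq2.13}.

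For the first stage, Proposition \ref{lm2.3} gives that $\{V^{(i)}\}$ is monotonically decreasing with pointwise limit $\bar V$. Since $\bar V \in C^1(\bar\Omega) \subset C(\bar\Omega)$ and $\bar\Omega$ is compact, Dini's theorem yields uniform convergence $V^{(i)} \to \bar V$ on $\bar\Omega$, exactly as in the proof of Proposition \ref{lm3}.

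For the second stage, the three-term decomposition used in Proposition \ref{lm3} applies verbatim: writing
\[\partial_{x_k}\bar V(x) - \partial_{x_k} V^{(i)}(x) = I_1 + I_2 + I_3,\]
the terms $I_1, I_3$ are bounded via equicontinuity of $\{\nabla V^{(i)}\}$ together with continuity of $\nabla\bar V$, and $I_2$ is controlled by the uniform convergence from Stage~1. This yields pointwise convergence $\nabla V^{(i)} \to \nabla \bar V$ on $\Omega$. To iterate the argument for the Hessian, this pointwise convergence must first be upgraded to uniform convergence on compact subsets of $\Omega$; this follows from the Arzel\`a--Ascoli theorem, since $\{\nabla V^{(i)}\}$ is equicontinuous and pointwise convergent, so every subsequence has a uniformly convergent further subsequence whose limit must coincide with $\nabla \bar V$. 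Running the decomposition once more with $\partial_{x_k} V^{(i)}$ in place of $V^{(i)}$, using the equicontinuity of $\{\nabla^2 V^{(i)}\}$ (the $m=2$ hypothesis) together with the uniform convergence of $\nabla V^{(i)}$ on closed balls $S_x \subset \Omega$, produces pointwise convergence $\nabla^2 V^{(i)} \to \nabla^2 \bar V$ on $\Omega$.

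For the third stage, pass to the limit in \eqref{eq2.13}. The update formula \eqref{eqx}, together with continuity of $g$ and of the projection $\cP_U$, gives pointwise convergence $u^{(i+1)} \to \bar u := \cP_U(-\tfrac{1}{2}R^{-1}g(x)^t\nabla \bar V(x))$ on $\Omega$. Since the trace is a continuous linear operator, passing to the limit in \eqref{eq2.13} yields
\[\nabla \bar V(x)^t \bigl(f(x) + g(x)\bar u(x)\bigr) + \tfrac{1}{2}\mathrm{Tr}\bigl[g_1(x)^t \nabla^2 \bar V(x) g_1(x)\bigr] + \ell(x) + \|\bar u(x)\|_R^2 = 0,\]
for all $x\in\Omega$, which is exactly \eqref{hjb2}. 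The uniqueness of the value function (see e.g.\ \cite[pg.\ 86]{FS06}) then yields $\bar V = V$. The main obstacle is the bootstrapping step in Stage~2: one must upgrade pointwise convergence of $\nabla V^{(i)}$ to uniform convergence on compact subsets before the difference-quotient argument can be repeated at the second-derivative level. This is precisely where the $m=1$ equicontinuity hypothesis is used a second time, via Arzel\`a--Ascoli.
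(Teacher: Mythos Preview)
Your proof is correct and follows essentially the same three-stage strategy as the paper: invoke the first-order argument of Proposition~\ref{lm3}, repeat it at the Hessian level, then pass to the limit in \eqref{eq2.13} and appeal to uniqueness. The paper's own proof is terser---it simply asserts that the second-derivative convergence ``can be argued as for the first derivatives'' without isolating the need for uniform (rather than merely pointwise) convergence of $\nabla V^{(i)}$ on the balls $S_x$; your Arzel\`a--Ascoli step makes this explicit and is a legitimate way to close that detail, though one could also observe that pointwise convergence at the finitely many points $x,\,x+e_k\delta$ already suffices once $x$ and $\delta$ are fixed. One minor correction: for uniqueness in the second-order (stochastic) setting the relevant reference in \cite{FS06} is pg.~247, not pg.~86, which treats the first-order case.
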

\begin{proof}
From Proposition \ref{lm3}, it follows that $\{\nabla V^{(i)}\}$ converges pointwise to $\nabla\bar V$ in $\Omega$ and $\bar u(x)=\cP_U\Big(-\frac{1}{2}R^{-1}g(x)^t\nabla\bar V(x)\Big)$.
Pointwise convergence of $\{\nabla^{2}V^{(i)}\}$ to $\nabla^{2}\bar V$ can be argued as for the first derivatives which was done in the proof of Proposition \ref{lm3}.
We can now pass to the limit $i \to \infty$ in \eqref{eq2.13} to obtain that
$\bar V$ satisfies the  HJB equation \eqref{hjb2}. Concerning the uniqueness of the value function $\bar V=V$,
we refer to e.g.  \cite[pg. 247]{FS06}.
\end{proof}

\section{Numerical examples}
Here we conduct numerical experiments to demonstrate the feasibility of the policy iteration in the presence of constraints and to compare the solutions between constrained and unconstrained formulations.
To describe a setup which is independent of a stabilizing feedback we also introduce a discount factor
$\lambda>0$. Unless specified otherwise we choose $\lambda=0.05$, and we also give results with $\lambda=0$.

\subsection*{Test 1: One dimensional linear equation.}
Consider the following minimization problem $$\underset{u(\cdot)}{\min} \; \cJ(x_0,u(\cdot))=\int_{0}^{\infty}e^{-\lambda t}\Big(\norm{x}^2+\norm{u(x)}^2_{R}\Big) dt$$
subject to the following deterministic dynamics with  control constraint
\begin{equation}\label{eq3.1}
\dot x(t)=0.5x(t)+u; \quad -1\leq u\leq 1, \quad x(0)=x_0;
\end{equation}
and $$\underset{u(\cdot)}{\min} \; \cJ(x_0,u(\cdot))=\E\int_{0}^{\infty}e^{-\lambda t}\Big(\norm{x}^2+\norm{u(x)}^2_{R}\Big) dt$$ subject to the following stochastic system with control constraint
\begin{equation}\label{eq3.2}
 dx(t)=(0.5x(t)+u)\hspace{0.1cm}dt+0.005\hspace{0.1cm}dW; \quad -1\leq u\leq 1, \quad x(0)=x_0,
\end{equation}
where $\lambda> 0$ is the discount factor. We  solve the GHJB equation \eqref{eq2.13} combining  an implicit method and with an upwind scheme over $\Omega=(-2,2)$ as follows
\begin{align}\label{eq3.3}
-\frac{(V^{n}_i-V^{(n-1)}_i)}{dt}&+\lambda V^n_i+\nabla V^n_{iupwind}(f(x_i)+gu^n_i)\notag\\
&+\frac12g_1^t(x_i)D^2V^n_ig_1(x_i)+x_i^2+\norm{u^n_i}^2_R=0,
\end{align}
where the superscript $n$ stands for the iteration loop, the subscript $i$ stands for the mesh point numbering in the state space ($V_i\approx V(x_i)$ for $i=1,\cdots, I \hspace{0.1cm}(\text{total number of grid points})$), $dt$ is the time step, $\Delta x=x_{i+1}-x_{i}$ is the distance between equi-spaced grid points, and $D^2V^n_i=(V^n_{i+1}-2V^n_i+V^n_{i-1})/\Delta x^2$ is the approximation of the second order derivative of the value function. To solve the first order HJB \eqref{eq2.3}, we take $g_1=0$ in \eqref{eq3.3}. \\
For the upwind scheme, we take forward difference $\nabla V_{i,F}=\frac{V_{i+1}-V_i}{\Delta x}$ whenever the drift of the state variable $S_{i,F}=f(x_i)+g(x_i)u_{i,F}>0$, and  backward difference $\nabla V_{i,B}=\frac{V_{i}-V_{i-1}}{\Delta x}$ if the drift $S_{i,B}=f(x_i)+g(x_i)u_{i,B}<0$, and $ D\bar V_i=-2R\bar u_i$ with $\bar u_i=-f(x_i)/g(x_i)$ for $S_{i,F}\leq 0\leq S_{i,B}$. Hence,
$$\nabla V^n_{iupwind}=\nabla V^n_{i,F}\one_{S_{i,F}>0}+\nabla V^n_{i,B}\one_{S_{i,B}<0}+ D\bar V^n_i\one_{S_{i,F}\leq 0\leq S_{i,B}},$$ where $\one$ is the characteristic function. The updated control policy for $(n+1)$ iteration becomes
$$u^{n+1}_i=\cP_{U=[-1,1]}(-\frac{1}{2}R^{-1}g(x_i)^t\nabla V^n_{iupwind}).$$ Note that since we solve the  HJB backward in time, it is necessary to construct the upwind scheme as above which is in a reverse compared  to the form an upwind scheme for
 dynamics forward in time. For more details about upwind schemes for HJB, see \cite{Aal17} including its appendix.

  We choose, $R=0.1$,  $dt=2$, $I=400$ and two different initial conditions $x_0=-1.8$ and $x_0=1$.
Figure \ref{fig:test1.1}(i) depicts the value functions in the unconstrained and the constrained case. As expected they are convex. Outside $[-1,1]$ the two value functions differ significantly. In the constrained case it tends to infinity at $\pm 2$ indicating that for such initial conditions the constrained control cannot stabilize anymore.
Figure \ref{fig:test1.1}(ii), shows the  evolution of the states under the effect of the control as shown in  \ref{fig:test1.1}(iv).  In the transient phase, the decay rate for the constrained control system  is slower than in the unconstrained case, which is the expected behavior. The temporal behavior of the running cost of $\norm{y(t)}$ is documented in Figure \ref{fig:test1.1}(iii). It is clear that the  uncontrolled solution diverges  whereas  $\norm{y(t)}^2$ tends to zero  for the cases with control, both for constrained and unconstrained controls. The pointwise
 error for the HJB equation \eqref{hjb1}  is documented in Figure \ref{fig:test1.1}(v).
 Similar behavior is achieved  for the stochastic dynamics. It is observed from Figure \ref{fig:test1.2}(i) that even with the small intensity of  noise ($g_1=0.005$), we can see the Brownian motion of the state cost for the controlled system. Figure \ref{fig:test1.2}(ii) plots that error for the value function  of the increasing iteration count.

 We have also solved this problem with $\lambda = 0$. For this purpose we initialized the algorithm with the solution of the discounted problem with $\lambda =0.05$. If the  $dt$ was further reduced, then the algorithm converged. Moreover,
the value functionals with $\lambda =0.05$ and $\lambda =0$ are visually indistinguishable.

\begin{figure}[!h]
	\centering
(i){\includegraphics[width=0.45\textwidth]{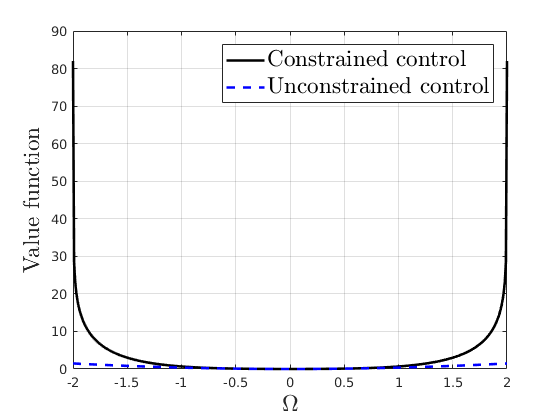}	}
(ii){\includegraphics[width=0.45\textwidth]{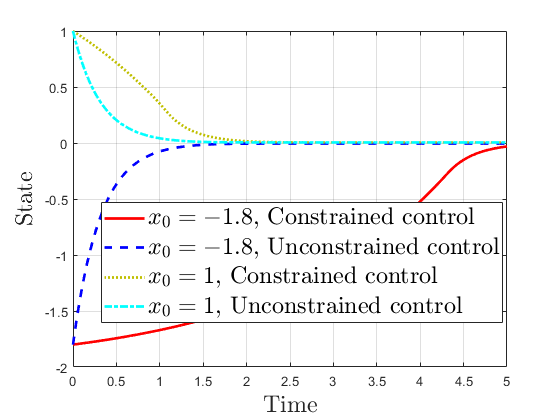}}
 (iii)	{\includegraphics[width=0.45\textwidth]{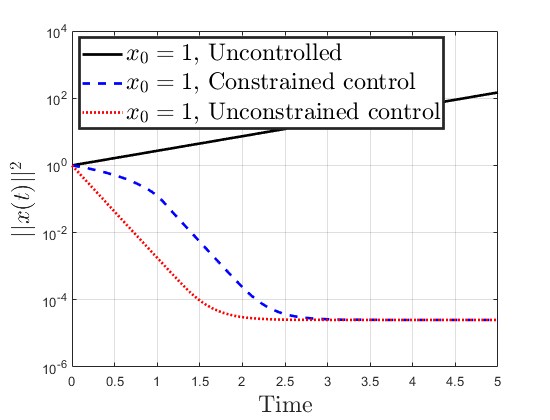}}
(iv){\includegraphics[width=0.45\textwidth]{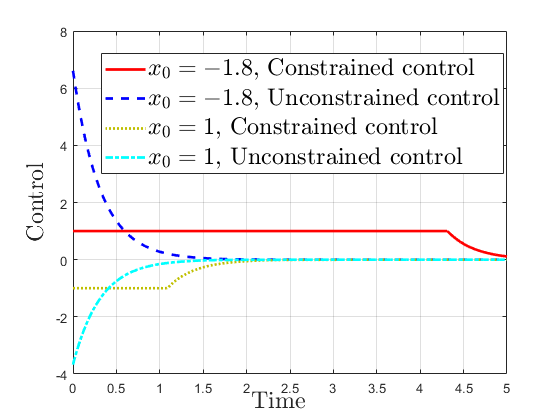}}
(v){\includegraphics[width=0.45\textwidth]{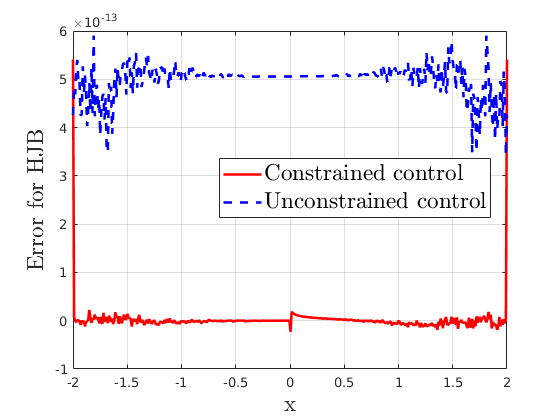}}
	\caption{Test 1: Deterministic case.  {\bf i)} Value function, {\bf ii)} State, {\bf iii)} State cost $\norm{x(t)}^2$, {\bf iv)} Control, {\bf v)} Residue for HJB with discount factor
\label{fig:test1.1}.}
\end{figure}
\begin{figure}[!h]
	\centering
(i)	\includegraphics[width=0.45\textwidth]{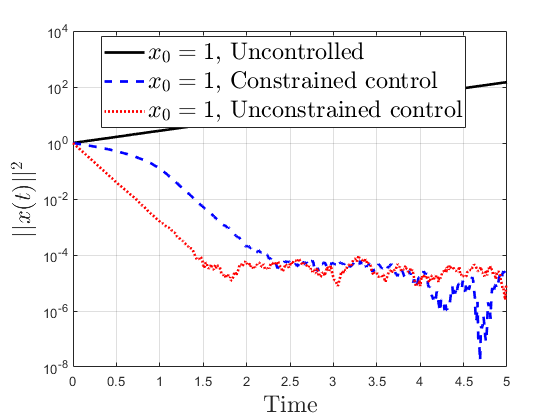}
(ii)\includegraphics[width=0.45\textwidth]{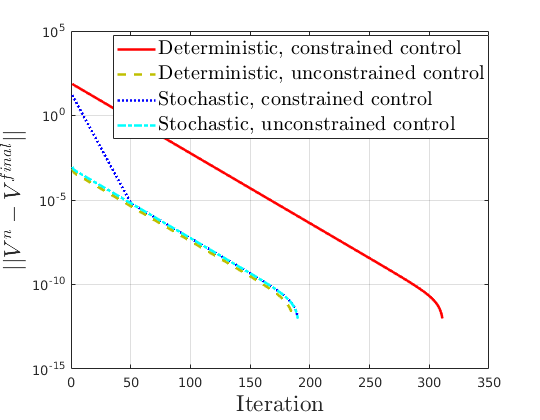}
	\caption{Test 1: Stochastic case.  {\bf i)} State cost $\norm{x(t)}^2$,  {\bf ii)} $||V^{(i)}-V^{Final}||$ for both cases. }\label{fig:test1.2}
\end{figure}

The next example focuses on the exclusion of discount factor once we have proper initialization, which is obtained through solving HJB equation with a discount factor.
\subsection{Test 2: One dimensional nonlinear equation.}
We consider the following infinite horizon problem
$$\underset{u(\cdot)}{\min} \; \cJ(x_0,u(\cdot))=\int_{0}^{\infty}\Big(\norm{x}^2+\norm{u(x)}^2_{R}\Big) dt$$
subject to the following dynamics with  control constraint
\begin{equation}\label{eq5.1}
\dot x(t)=x(t)-x^3(t)+u; \quad -1\leq u\leq 1, \quad x(0)=x_0.
\end{equation}
Here $\Omega=(-2,2)$, $dt=0.001$, $R=0.1$, $I=400$. To obtain a proper initialization $(u^{(0)},V^{(0)})$, we solve \eqref{eq3.3} with discount factor $\lambda=0.05$. Now we can solve \eqref{eq2.3} with the HJB initialization using the aforementioned upwind scheme.

\begin{figure}[!h]
	\centering
	(i)	\includegraphics[width=0.45\textwidth]{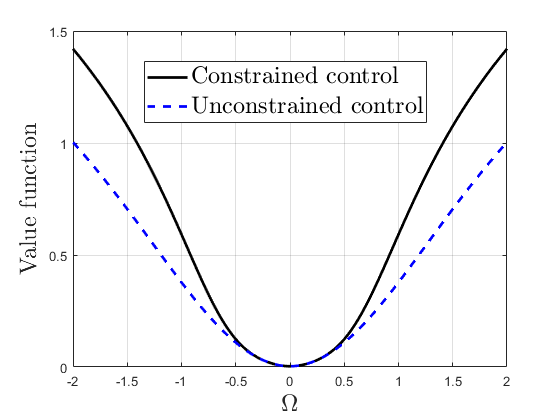}
	(ii)\includegraphics[width=0.45\textwidth]{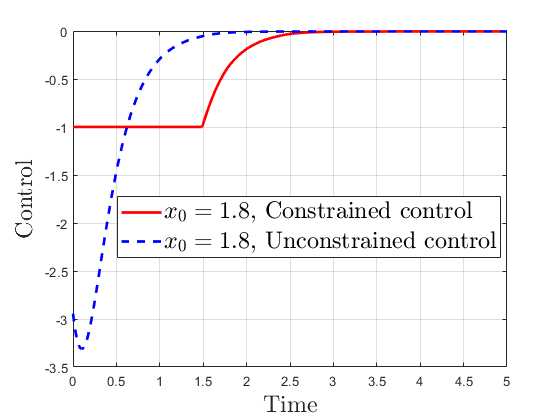}
	(iii)	\includegraphics[width=0.45\textwidth]{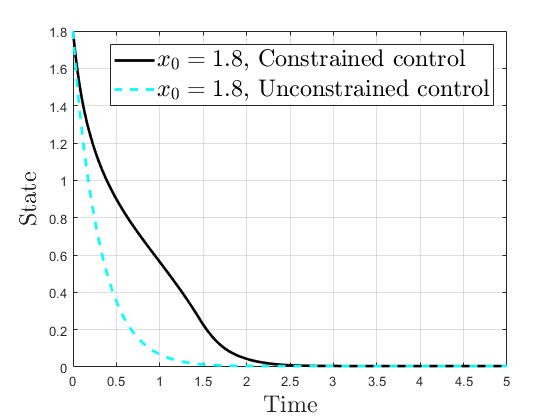}
	(iv)	\includegraphics[width=0.45\textwidth]{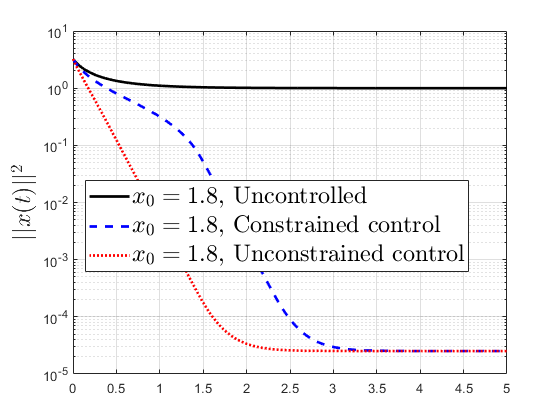}
	\caption{Test 2: Deterministic case.  {\bf i)} Value function,  {\bf ii)} Control, {\bf iii)} State, {\bf iv)} State cost. }\label{fig:ntest2}
\end{figure}
Behaviors of the value function, state, control and state cost both for the constrained and unconstrained control systems are documented in Figures \ref{fig:ntest2}(i)-(iv) with a observation of faster decay rate for trajectories in the unconstrained  cases.
\subsection{Test 3: Three dimensional linear system}
Consider the  minimization problem
\begin{align}\label{3dcost}
\underset{u(\cdot)}{\min}\; \cJ(u(\cdot),(x_0,y_0,z_0))=\int_{0}^{\infty}e^{-\lambda t}\Big(\norm{x}^2+\norm{y}^2+\norm{z}^2+\norm{u_1}^2_{R}+\norm{u_2}^2_{R}\Big) dt
\end{align}
subject to the following 3D linear control system
\begin{align}
\frac{dx}{dt}&=\sigma(y-x)\label{eqx3.4}; \quad x(0)=x_0,\\
\frac{dy}{dt}&=x\rho-y+u_1\label{eqx3.5};\quad -1\leq u_1\leq 1,\quad y(0)=y_0,\\
\frac{dz}{dt}&=-\beta z+u_2\label{eqx3.6};\quad -1\leq u_2\leq 1, \quad z(0)=z_0,
\end{align}
with $\sigma=10$, $\rho=1.1$, and $\beta=8/3$. System  \eqref{eqx3.4}-\eqref{eqx3.6} arises from
 linearization  of the Lorenz system at (0,0,0). For $\rho>1$, the equilibrium solution $(0,0,0)$ is unstable \cite[Chapter 1]{S82}.
\begin{figure}[!h]
	\centering
	(i)\includegraphics[width=0.45\textwidth]{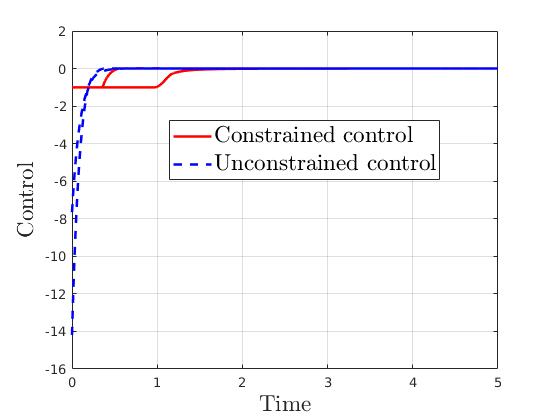}
	(ii)\includegraphics[width=0.45\textwidth]{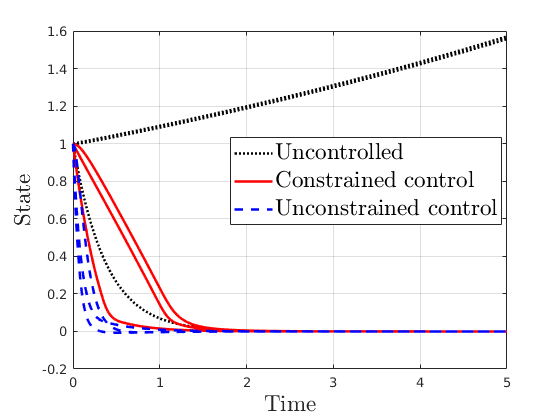}
(iii)\includegraphics[width=0.45\textwidth]{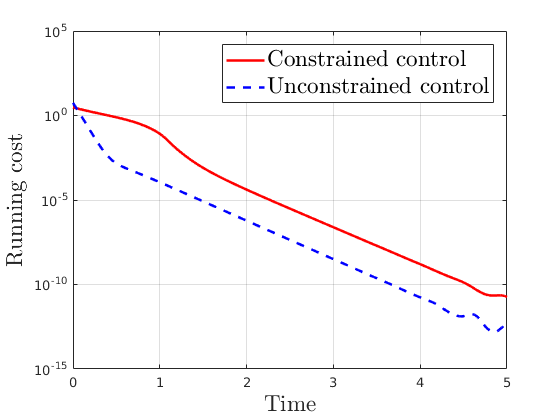}
	(iv)\includegraphics[width=0.45\textwidth]{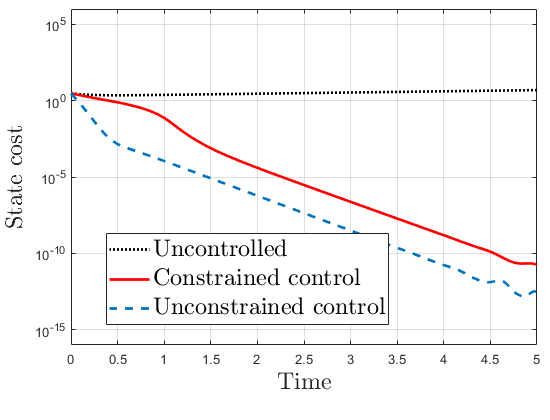}
	\caption{Test 3: Deterministic case, 3D Linear, $R=0.01$,Time step $dt=10$, Initial condition $[x_0,y_0,z_0]=[1,1,1]$.  {\bf i)} Control, {\bf ii)} State, {\bf iii)} Running cost $(\norm{x(t)}^2+\norm{y(t)}^2+\norm{z(t)}^2+\norm{u_1(t)}^2_{R})+\norm{u_2(t)}^2_{R})$, {\bf iv)} State cost $\norm{x(t)}^2+\norm{y(t)}^2+\norm{z(t)}^2$ . }\label{fig:test2.1}
\end{figure}

Figure \ref{fig:test2.1} (i) depicts the unconstrained and the constrained  controls, where one component of the control constraint is active up to $t=0.5$, and the other one up to $t=1$. The first two  components of the state of the uncontrolled solution diverge, while the third one converges, see the dotted curves in \ref{fig:test2.1}(ii).  For the unconstrained HJB control the state tends to zero with a faster decay rate than for the constrained one. See again Figure \ref{fig:test2.1} (ii). Figures \ref{fig:test2.1}(iii)-(iv) document the running  and state costs, respectively.
\\
We next turn to the stochastic version of problem  \eqref{3dcost}-\eqref{eqx3.6} with noise of intensity .05 added to the second and third components.

\begin{figure}[!h]
	\centering
	(i)\includegraphics[width=0.45\textwidth]{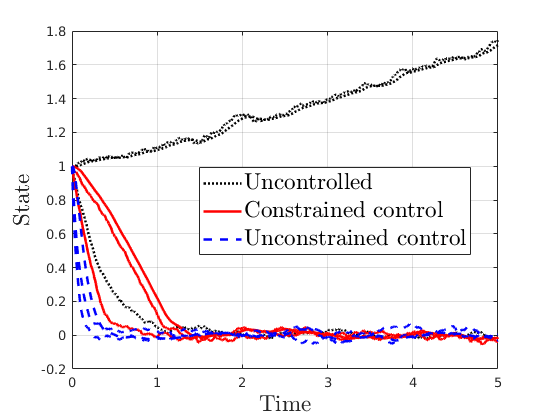}
	(ii)\includegraphics[width=0.45\textwidth]{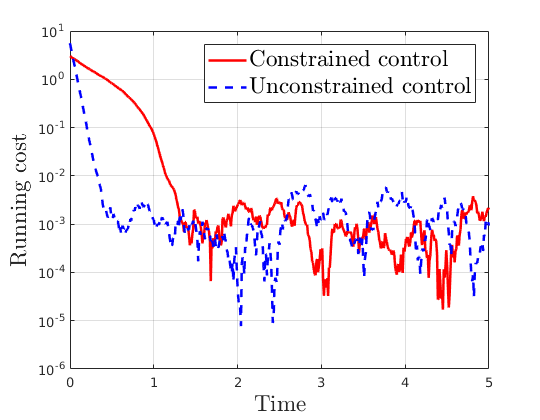}
	\caption{Test 3: Stochastic case, 3D Linear system, $R=0.01$,  Time step $dt=10$, Initial condition $[x_0,y_0,z_0]=[1,1,1]$.   {\bf i)} State, {\bf ii)} Running cost $(\norm{x(t)}^2+\norm{y(t)}^2+\norm{z(t)}^2+\norm{u_1(t)}^2_{R}+\norm{u_2(t)}^2_{R})$. }\label{fig:test2.2}
\end{figure}
For the stochastic system the constrained and unconstrained states tend to zero with some oscillations,
see Figure \ref{fig:test2.2}

\subsection*{Test 4: Lorenz system.}
For the third test, we choose the Lorenz system which appears in weather prediction, for example. This is a nonlinear three dimensional system, with  control appearing in  the second equation,
\begin{align}
\frac{dx}{dt}&=\sigma(y-x)\label{eq3.4}; \quad x(0)=x_0,\\
\frac{dy}{dt}&=x(\rho-z)-y+u\label{eq3.5};\quad -1\leq u\leq 1,\quad y(0)=y_0,\\
\frac{dz}{dt}&=xy-\beta z\label{eq3.6}; \quad z(0)=z_0,
\end{align}
where the three parameters $\sigma>1$, $\rho>0$ and $\beta>0$ have physical interpretation. For more details see e.g. \cite{S82}.
 When $u=0$ in \eqref{eq3.5}, we obtain the original uncontrolled Lorenz system. The Lorenz system has 3 steady state solution namely $C0=(0,0,0)$, $C^+=\Big(\sqrt{\beta(\rho-1)}, \sqrt{\beta(\rho-1)},\rho-1\Big)$ and $C^-=\Big(-\sqrt{\beta(\rho-1)}, -\sqrt{\beta(\rho-1)},\rho-1\Big)$. For $\rho<1$, all steady state solutions are stable. For $\rho>1$, $C0$ is always unstable and $C^\pm$ are stable only for $\sigma>\beta+1$ and $1<\rho<\rho^*=\frac{\sigma(\sigma+\beta+3)}{(\sigma-\beta-1)}$. At $\rho=\rho^*$, $C^\pm$ becomes unstable. Here, we take $\sigma=10$, $\beta=8/3$ and $\rho=2$ so that $C0=(0,0,0)$ is an unstable equilibrium. We solve the HJB equation over $\Omega=(-2,2)^3$ with  $dt=0.1$,  and $R=0.01$.
\begin{figure}[!h]
	\centering
	(i)\includegraphics[width=0.45\textwidth]{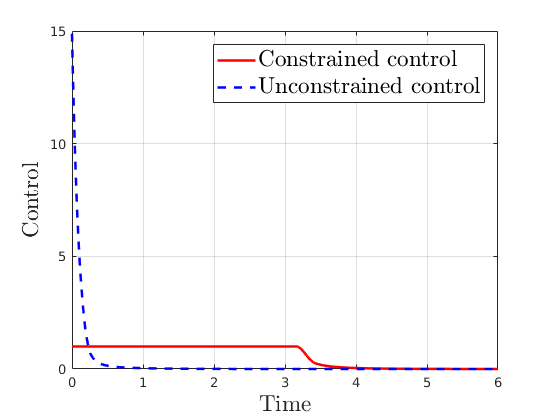}
	(ii)\includegraphics[width=0.45\textwidth]{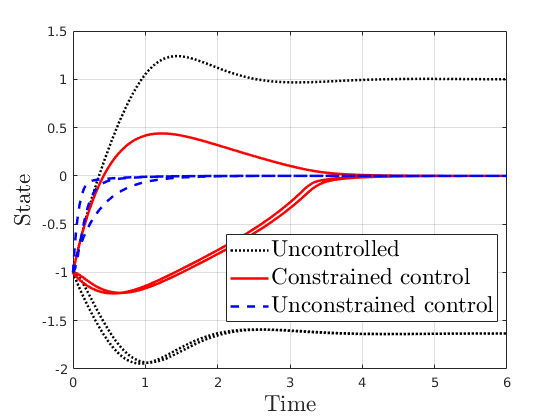}
	\caption{Test 4: Deterministic case, Lorenz system, $R=0.01$, Initial condition $[x_0,y_0,z_0]=[-1,-1,-1]$.  {\bf i)} Control, {\bf ii)} State.
	}\label{fig:test3.1}
\end{figure}
From  Figure \ref{fig:test3.1}(i), it can be observed that the unconstrained HJB control quickly tends to zero whereas the  constrained control is active up to $t\sim3$ and then converges to zero. Figure \ref{fig:test3.1}(ii) shows that
in absence of control, the state does not tend to the origin but rather to a stable equilibrium. With HJB-constrained or unconstrained control synthesis, the controlled state tends to the origin, with a faster decay rate for the unconstrained control compared to the constrained  one.\\

Similar numerical results were also obtained in the stochastic case.
\section*{Concluding remarks}
We investigated  convergence of the policy iteration technique for  the  stationary HJB equations which arise from the deterministic and stochastic control of dynamical systems in the presence of control constraints. The control constraints are realized as hard constraints by a projection operator rather than by approximation by means of
a penalty technique, for example. Numerical examples
illustrate the feasibility of the approach and provide a comparison of the behavior of the closed loop controls in the presence of control constraints and without them. The algorithmic realization is based on an upwind   scheme.

\section*{Acknowledgments}
The authors gratefully acknowledge support by the ERC advanced grant 668998
(OCLOC) under the EU's H2020 research program.

\begin{thebibliography}{99}
	\bibitem{AL05} M. Abu-Khalaf and F. L. Lewis, {\em Nearly optimal control laws for nonlinear systems with saturating actuators using a neural network {HJB} approach},  Automatica J. IFAC, {\bf 41}(5)(2005) 779-791.
	\bibitem{Aal17}
	Y. Achdou, J. Han, J.-M. Lasry, P.-L. Lions and B. Moll,.{\em Income and wealth distribution in macroeconomics: A continuous-time approach},  National Bureau of Economic Research, 2017.
%
\bibitem{adams2003}  R. A. Adams and  J. J. F. Fournier, \emph{Sobolev spaces},
Elsevier/Academic Press, Amsterdam, 2003.
%
\bibitem{AFK15}
A. Alla, M. Falcone and D. Kalise, {\em An efficient policy iteration algorithm for dynamic programming equations},  SIAM J. Sci. Comput., {\bf 37}(1)(2015) 181-200.
%
\bibitem{AFS18}
A. Alla, M. Falcone and L. Saluzzi. {\em An efficient {DP} algorithm on a tree-structure for finite horizon optimal control problems}, SIAM J. Sci. Comput., {\bf41} (2019),  2384–2406.
%
%
\bibitem{BCD97}
M. Bardi and  I. Capuzzo-Dolcetta, {\em Optimal control and viscosity solutions of
	{H}amilton-{J}acobi-{B}ellman equations}, Birkh\"auser Boston,  1997.
%
\bibitem{BST7}
R.~W. Beard, G.~N. Saridis, and J.~T. Wen, {\em Galerkin approximation of the Generalized Hamilton-Jacobi-Bellman equation}, Automatica {\bf 33}(12)(1997) 2159-2177.
\bibitem{BST98}
R.~W. Beard, G.~n. Saridis, and J.~T. Wen. {\em Approximate solutions to the Time-Invariant Hamilton-Jacobi-Bellman equation,} J. Optim. Theory Appl. {\bf 96}(3)(1998) 589--626.
\bibitem{BM98}
R.~W. Beard and T. ~W. Mclain. {\em Successive Galerkin approximation algorithms for nonlinear optimal and robust control} Internat. J. Control {\bf 71}(5)(1998) 717--743.
%
%
%
\bibitem{BMZ09}
O. Bokanowski, S. Maroso and H. Zidani, {\em Some Properties of Howards' Algorithm}, SIAM J. Numer. Anal. {\bf 47}(4)(2009), 3001-3026.
%
\bibitem{FS} O. Bokanowski, M. Falcone and S. Sahu. {\em An efficient filtered scheme for some first order time-dependent {H}amilton-{J}acobi equations}, SIAM J. Sci. Comput. {\bf 38}(1) (2016), 171--195.
\bibitem{BZ03} J. F. Bonnans, and H. Zidani. {\em Consistency of generalized finite difference schemes for the stochastic HJB equation}, SIAM J.Numer. Anal. {\bf 41}(3)(2003), 1008--1021.
\bibitem{BKP19}
T. Breiten, K. Kunisch and L. Pfeiffer. {\em Taylor expansions of the value function associated with a bilinear optimal control problem}, Ann. I. H. Poincare-AN, In Press, 2019.
%
\bibitem{CCFP12}
S. Cacace, E. Cristiani, M. Falcone and A. Picarelli. {\em A patchy dynamic programming scheme for a class of Hamilton-Jacobi-Bellman equations}, SIAM J. Sci. Comput. {\bf 34}(5)(2012), 2625--2649.
%
\bibitem{CF13}  P. Cannarsa, and  H. Frankowska, {\em Local regularity of the value function in optimal control}, Systems Control Lett., {\bf 62}(9)(2013), 791--794.
%
\bibitem{CDI84} I. Capuzzo-Dolcetta and H. Ishii. {\em Approximate solutions of the {B}ellman equation of deterministic control theory}, Appl. Math. Optim. {\bf 11}(2)(1984), 161--181.
%
%
\bibitem{CL84} M.G. Crandall and P.L. Lions. {\em Two approximations of solutions of {H}amilton-{J}acobi equations}, Math. Comp. {\bf 43}(167)(1984), 1--19.
%
\bibitem{DKK19} S. Dolgov, D. Kalise and K. Kunisch. {\em Tensor Decompositions for High-dimensional Hamilton-Jacobi-Bellman Equations}, arXiv:1908.01533, 2019.
%
\bibitem{FF14}
M. Falcone, and R. Ferretti. {\em Semi-{L}agrangian approximation schemes for linear and
{H}amilton-{J}acobi equations}, Society for Industrial and Applied Mathematics (SIAM), Philadelphia, 2014.
\bibitem{FF16}
M. Falcone, and R. Ferretti. {\em Numerical methods for {H}amilton-{J}acobi type equations}, Handb. Numer. Anal., Elsevier/North-Holland, Amsterdam,{\bf 17} 2016,  603--626.
%
\bibitem{FR12}
W. H. Fleming, and R. W. Rishel. {\em Deterministic and stochastic optimal control}, Springer Science \& Business Media, 2012.
%
\bibitem{FS06}
W. H. Fleming, and H. M. Soner, {\em Controlled Markov processes and viscosity solutions}, Springer Science \& Business Media, Philadelphia, 2006.
%
\bibitem{GK16}
J. Garcke and A. Kr\"oner. {\em Suboptimal feedback control of {PDEs} by solving {HJB} equations on adaptive sparse grids}, J. Sci. Comput.,  \textbf{43} (2017),1--28.
%
\bibitem{G05}
R. Goebel, {\em Convex optimal control problems with smooth {H}amiltonians},  SIAM J. Control Optim., \textbf{43}(5)(2005), 1787--1811.
%
\bibitem{G07}
R. Goebel and M. Subbotin, {\em Continuous time linear quadratic regulator with control constraints via convex duality},  IEEE transactions on automatic control, \textbf{52}(5)(2007), 886--892.
\bibitem{GR85}  R. Gonzalez and E. Rofman. {\em On deterministic control problems: an approximation procedure for the optimal cost. {I}. {T}he stationary problem}, SIAM J. Control Optim.  {\bf 23}(2)(1985), 242--266.
%

\bibitem{H60} R. Howard. {\em Dynamic Programming and Markov Processes}, The M.I.T. Press, 1960.
%
\bibitem{kk2018} D. Kalise and K. Kunisch. {\em Polynomial approximation of high-dimensional
	{H}amilton-{J}acobi-{B}ellman equations and applications to
	feedback control of semilinear parabolic {PDE}s}, SIAM J. Sci. Comput.  {\bf 40}(2)(2018), 629--652.
\bibitem{KKZ14}
A. Kr\"oner, K. Kunisch and H. Zidani. {\em Optimal feedback control of undamped wave equations	by solving a HJB equation}, ESAIM: COCV, EDP Sciences, {\bf 21} (2)(2014), 442--464.
%
\bibitem{KD01} H. J. Kushner and P. Dupuis. \emph{Numerical methods for stochastic control problems in	continuous time},Springer-Verlag, New York (2001), xii+475.
%
%
\bibitem{KW20}
K. Kunisch and D. Walter. {\em Semiglobal optimal feedback stabilization of
	autonomous systems via deep neural network approximation,}, arXiv:2002.08625v1.

%
\bibitem{LL67} R. J. Leake and R.-W. Liu, \emph{Construction of Suboptimal Control Sequences,} SIAM J. Control  {\bf 5}(1) (1967), 54--63.
%
\bibitem{L69} D. L. Lukes, \emph{Optimal regulation of nonlinear dynamical systems,} SIAM J. Control  {\bf 7}(1) (1969), 75--100.
\bibitem{L98} S. E. Lyshevski, \emph{Optimal control of nonlinear continuous-time systems: design of bounded controllers via generalized nonquadratic functionals,} Proceedings of the 1998 American Control Conference. ACC (IEEE Cat. No. 98CH36207) {\bf 1}(1998), 205--209.
%
\bibitem{M07} X. Mao, \emph{Stochastic differential equations and applications,} Elsevier, 2007.
%
%
%
\bibitem{OF03} S. Osher and R. Fedkiw. {\em Level set methods and dynamic implicit surfaces,} Applied Mathematical Sciences, Springer-Verlag, New York, 2003.
%
%
%
%
\bibitem{SL79} G. N. Saridis and C. S. G. Lee, {\em An approximation theory of optimal control for trainable
	manipulators,} IEEE Trans. Systems Man Cybernet, \textbf{9}(3)(1979), 152--159.
%
\bibitem{S82} C. Sparrow, {\em The {L}orenz equations: bifurcations, chaos, and strange
	attractors}, Springer-Verlag, New York-Berlin, 1982.
\bibitem{YPL}
E. Stefansson and Y.P. Leong. {\em Sequential Alternating Least Squares for Solving High Dimensional Linear Hamilton-Jacobi-Bellman Equations,} Proc. of IEEE/RSJ International Conference on Intelligent Robots and Systems (IROS),  doi: 10.1109/IROS.2016.7759553, 2016.
%
%
\bibitem{WS02} F.Y. Wang and G. N. Saridis, {\em On successive approximation of optimal control of stochastic
	dynamic systems}, Modeling uncertainty, Internat. Ser. Oper. Res. Management Sci., Kluwer Acad. Publ., Boston, MA {\bf 46}(2002), 333--358.
%

%
\end{thebibliography}
\end{document}